\numberwithin{equation}{section}
 \newtheorem{theorem}{Theorem}[section]
 \newtheorem{lemma}[theorem]{Lemma}
\def\3bar{{|\hspace{-.02in}|\hspace{-.02in}|}}
\def\E{{\mathcal{E}}}
\def\T{{\mathcal{T}}}
\def\btau{\boldsymbol{\tau}}
\def\beta{\boldsymbol{\eta}}
\def\cal#1{{\mathcal #1}}
\def\pT{{\partial T}}
\def\bu{{\mathbf{u}}}
\def\bv{{\mathbf{v}}}
\def\bn{{\mathbf{n}}}
\def\be{{\mathbf{e}}}
\def\btau{{\boldsymbol{\tau}}}
\def\bzeta{{\boldsymbol{\zeta}}}
\newtheorem{remark}{Remark}[section]
\newtheorem{algorithm}{Weak Galerkin Algorithm}[section]
\numberwithin{equation}{section}
\def\3bar{{|\hspace{-.02in}|\hspace{-.02in}|}}
 \def\cal#1{\mathcal{#1}}
\begin{document}

\title []
 {Simplified Weak Galerkin Finite Element Methods  for Biharmonic Equations on Non-Convex Polytopal Meshes}

\author {Chunmei Wang}
\address{Department of Mathematics, University of Florida, Gainesville, FL 32611, USA. }
\email{chunmei.wang@ufl.edu}
\thanks{The research of Chunmei Wang was partially supported by National Science Foundation Grant DMS-2136380.}

\begin{abstract}
 This paper presents a simplified  weak Galerkin (WG) finite element method for solving biharmonic equations avoiding the use of traditional stabilizers. The proposed WG method supports both convex and non-convex polytopal elements in finite element partitions, utilizing bubble functions as a critical analytical tool. The simplified WG method is  symmetric  and positive definite. Optimal-order error estimates are established for WG approximations in both the discrete $H^2$ norm and the $L^2$ norm.
\end{abstract}

\keywords{weak Galerkin,   weak second order partial derivative, stabilizer-free, bubble functions,  non-convex, polytopal meshes, biharmonic equations.}

\subjclass[2010]{65N30, 65N15, 65N12, 65N20}
 
\maketitle

\section{Introduction}  
In this paper, we aim to develop a novel weak Galerkin (WG) finite element method for the biharmonic equation  that is applicable to non-convex polytopal meshes and eliminates the need for traditional stabilizers. To this aim, we consider the biharmonic equation with Dirichlet and Neumann boundary conditions. The goal is to find an unknown function $u$ satisfying
\begin{equation}\label{model}
 \begin{split}
  \Delta^2 u=&f, \qquad\qquad \text{in}\quad \Omega,\\
 u=&\xi,\qquad\qquad \text{on}\quad \partial\Omega,\\
\frac{\partial u}{\partial \bn}=&\nu,\qquad\qquad \text{on}\quad \partial\Omega, 
 \end{split}
 \end{equation}
where $\Omega\subset \mathbb R^d$ is an open bounded domain with a Lipschitz continuous boundary $\partial\Omega$. The domain $\Omega$ considered in this paper can be of any dimension $d\geq 2$. For the sake of clarity in presentation, we will focus on the case where $d=2$ throughout this paper. However, the analysis presented here can be readily extended to higher dimensions ($d\geq 3$) without significant   modifications.

The variational formulation of the model problem \eqref{model} is as follows: Find an unknown function $u\in H^2(\Omega)$ satisfying $u|_{\partial\Omega}=\xi$ and $\frac{\partial u}{\partial \bn}|_{\partial\Omega}=\nu$,  and the following equation 
\begin{equation}\label{weak}
\sum_{i,j=1}^2(\partial^2_{ij} u, \partial^2_{ij} v)=(f, v), \qquad \forall v\in H_0^2(\Omega),
 \end{equation} 
  where $\partial^2_{ij}$ denotes the second order partial derivative with respect to $x_i$ and $x_j$,  and $H_0^2(\Omega)=\{v\in H^2(\Omega): v|_{\partial\Omega}=0,   \nabla v |_{\partial\Omega}=0\}$.

The  WG  finite element method offers an innovative framework for the numerical solution of partial differential equations (PDEs). This approach approximates differential operators within a structure inspired by the theory of distributions, particularly for piecewise polynomial functions. Unlike traditional methods, WG reduces the regularity requirements on approximating functions through the use of carefully designed stabilizers. Extensive studies have highlighted the versatility and effectiveness of WG methods across a wide range of model PDEs, as demonstrated by numerous references \cite{wg1, wg2, wg3, wg4, wg5, wg6, wg7, wg8, wg9, wg10, wg11, wg12, wg13, wg14, wg15, wg16, wg17, wg18, wg19, wg20, wg21, itera, wy3655}, establishing WG as a powerful tool in scientific computing.
The defining feature of WG methods lies in their innovative use of weak derivatives and weak continuities to construct numerical schemes based on the weak forms of the underlying PDEs. This unique structure provides WG methods with exceptional flexibility, enabling them to address a wide variety of PDEs while ensuring both stability and accuracy in their numerical solutions.

Within the WG framework, a notable advancement is the Primal-Dual Weak Galerkin (PDWG) method. This approach effectively addresses challenges that traditional numerical methods often encounter \cite{pdwg1, pdwg2, pdwg3, pdwg4, pdwg5, pdwg6, pdwg7, pdwg8, pdwg9, pdwg10, pdwg11, pdwg12, pdwg13, pdwg14, pdwg15}. The PDWG method conceptualizes numerical solutions as the constrained minimization of functionals, where the constraints mirror the weak formulation of PDEs through the use of weak derivatives. This formulation leads to an Euler-Lagrange equation that incorporates both the primal variables and the dual variables (Lagrange multipliers), resulting in a robust numerical scheme.

This paper presents a simplified formulation of the WG finite element method, capable of handling both convex and non-convex elements in finite element partitions. A key innovation of our method is the elimination of stabilizers through the use of higher-degree polynomials for computing weak second-order partial derivatives. This design preserves the size and global sparsity of the stiffness matrix while substantially reducing the programming complexity associated with traditional stabilizer-dependent methods. The method leverages bubble functions as a critical analytical tool, representing a significant improvement over existing stabilizer-free WG methods \cite{ye}, which are limited to convex polytopal meshes. Our approach is versatile, accommodating arbitrary dimensions and polynomial degrees in the discretization process. In contrast, prior stabilizer-free WG methods \cite{ye} often require specific polynomial degree combinations and are restricted to 2D or 3D settings.
Theoretical analysis establishes optimal error estimates for the WG approximations in both the discrete
$H^2$ norm and an 
$L^2$ norm.

 This paper is organized as follows. Section 2 provides a brief review of the definition of the weak second order partial derivative and its discrete counterpart. In Section 3, we introduce an efficient WG  scheme that eliminates the need for stabilization terms. Section 4 establishes the existence and uniqueness of the solution. The error equation for the proposed WG scheme is derived in Section 5. Section 6 focuses on obtaining the error estimate for the numerical approximation in the discrete 
$H^2$
 norm, while Section 7 extends the analysis to derive the error estimate in the
$L^2$
 norm.  

Throughout this paper, we adopt standard notations. Let $D$ be any open, bounded domain with a Lipschitz continuous boundary in $\mathbb{R}^d$. The inner product, semi-norm, and norm in the Sobolev space  $H^s(D)$ for any integer $s\geq0$ are denoted by $(\cdot,\cdot)_{s,D}$, $|\cdot|_{s,D}$ and $\|\cdot\|_{s,D}$ respectively. For simplicity, when the domain $D$ is $\Omega$, the subscript $D$ is omitted from these notations. In the case $s=0$, the notations  
$(\cdot,\cdot)_{0,D}$, $|\cdot|_{0,D}$ and $\|\cdot\|_{0,D}$ are further  simplified as $(\cdot,\cdot)_D$, $|\cdot|_D$ and $\|\cdot\|_D$, respectively.

\section{Discrete Weak Second Order Partial Derivatives}\label{Section:Hessian}
This section provides a brief review of the definition of weak second order partial derivatives and their discrete counterparts, as introduced in \cite{wg21}.

Let $T$ be a polygonal element with boundary $\partial T$. A weak function on $T$  is represented  as  $v=\{v_0, v_b, \bv_g\}$, where $v_0\in L^2(T)$, $v_b\in L^{2}(\partial T)$ and $\bv_g\in [L^2(\partial T)]^2$. The first component, $v_0$,  denotes the value of $v$ within the interior of $T$, while the second component, $v_b$, represents the value of $v$  on the boundary of $T$. The third component  $\bv_g\in \mathbb R^2$  with components $v_{gi}$ ($i=1, 2$) approximates the gradient $\nabla v$ on the boundary $\partial T$. In general, $v_b$ and $\bv_g$ are treated as independent of the traces of $v_0$ and $\nabla v_0$, respectively.  
 
The space of all weak functions on $T$, denote by $W(T)$, is defined as  
 \begin{equation*}\label{2.1}
 W(T)=\{v=\{v_0,v_b, \bv_g\}: v_0\in L^2(T), v_b\in L^{2}(\partial
 T), \bv_g \in [L^2(\partial T)]^2\}.
\end{equation*}
 
 The weak second order partial derivative,  $\partial^2_{ij, w}$, is a linear
 operator mapping 
 $W(T)$ to the dual space of $H^2(T)$. For any
 $v\in W(T)$, $\partial^2_{ij, w} v$ is defined as a bounded linear functional on $H^2(T)$, given by:
 \begin{equation*}\label{2.3}
  (\partial^2_{ij, w}v, \varphi)_T=(v_0, \partial^2_{ji}\varphi)_T-
  \langle v_b n_i, \partial_j \varphi \rangle_{\partial T}+\langle v_{gi}, \varphi n_j\rangle_{\partial T},\quad \forall \varphi\in H^2(T),
  \end{equation*}
 where $ \bn$, with components $n_i (i=1,2)$,  represents the unit outward normal vector to $\partial T$.
 
 For any non-negative integer $r\ge 0$, let $P_r(T)$ denote the space of
 polynomials on $T$ with total degree at most 
 $r$. A discrete weak
second order partial derivative,  $\partial^2_{ij, w, r, T}$, is a linear operator
 mapping 
 $W(T)$ to $P_r(T)$. For any $v\in W(T)$,
 $\partial^2_{ij, w, r, T}v$ is  the unique polynomial  in $P_r(T)$ satisfying
 \begin{equation}\label{2.4}
 (\partial^2_{ij, w, r, T}v, \varphi)_T=(v_0, \partial^2_{ji}\varphi)_T-
  \langle v_b n_i, \partial_j \varphi \rangle_{\partial T}+\langle v_{gi}, \varphi n_j\rangle_{\partial T},\quad \forall \varphi \in P_r(T).
  \end{equation}
 For a smooth $v_0\in
 H^2(T)$, applying  standard  integration by parts to the first
 term on the right-hand side of (\ref{2.4})  yields:
 \begin{equation}\label{2.4new}
  (\partial^2_{ij, w, r, T}v, \varphi)_T=(\partial^2_{ij}v_0,  \varphi)_T-
  \langle (v_b-v_0) n_i, \partial_j \varphi \rangle_{\partial T}+\langle v_{gi}-\partial_i v_0, \varphi n_j\rangle_{\partial T},  
  \end{equation} 
 for any $ \varphi \in P_r(T)$.

\section{Weak Galerkin Algorithms without Stabilization Terms}\label{Section:WGFEM}
 Let ${\cal T}_h$ be a finite element partition of the domain
 $\Omega\subset \mathbb R^2$ into polygons. Assume that ${\cal
 T}_h$ satisfies  the shape regularity condition    \cite{wy3655}.
Let ${\mathcal E}_h$ represent the set of all edges in
 ${\cal T}_h$, and denote the set of interior edges by ${\mathcal E}_h^0={\mathcal E}_h \setminus
 \partial\Omega$. For any element $T\in {\cal T}_h$, let $h_T$ be its diameter, and define the mesh size as 
 $h=\max_{T\in {\cal
 T}_h}h_T$.

 Let $k$, $p$ and $q$ be integers such that $k\geq p\geq q\geq 1$. For any element $T\in\T_h$, the local weak finite element space is defined as:
 \begin{equation*}
 V(k, p, q, T)=\{\{v_0,v_b, \bv_g\}: v_0\in P_k(T), v_b\in P_{p}(e), \bv_g\in [P_{q}(e)]^2, e\subset \partial T\}.    
 \end{equation*}
By combining the local spaces $V(k, p, q,T)$ across all elements $T\in {\cal T}_h$ and ensuring continuity of  $v_b$ and $\bv_g$   along the interior edges  $\E_h^0$,  we obtain the global weak finite element space:
 $$
 V_h=\big\{\{v_0,v_b, \bv_g\}:\ \{v_0,v_b, \bv_g\}|_T\in V(k, p, q,  T),
 \forall T\in {\cal T}_h \big\}.
 $$ 
The subspace of $V_h$ consisting of functions with vanishing boundary values on $\partial\Omega$ is defined as:
$$
V_h^0=\{v\in V_h: v_b|_{\partial\Omega}=0, \bv_g|_{\partial\Omega}=0\}.
$$

For simplicity, the discrete weak second order partial derivative  $\partial^2_{ij, w}v$ is used to denote the operator 
$\partial^2_{ij, w, r, T} v$ defined in 
(\ref{2.4}) on each element $T\in {\cal T}_h$,  as:
$$
(\partial^2_{ij, w} v)|_T= \partial^2_{ij, w, r, T}(v |_T), \qquad \forall T\in \T_h.
$$
 
On each element $T\in\T_h$, let $Q_0$  denote the $L^2$ projection onto $P_k(T)$. On each edge $e\subset\partial T$, let $Q_b$ and $Q_n$  denote the $L^2$ projection operators onto $P_{p}(e)$ and $P_{q}(e)$, respectively. 
 For any $w\in H^2(\Omega)$,  the $L^2$ projection  into the weak finite element space $V_h$ is denoted by $Q_h w$, defined as:
 $$
  (Q_hw)|_T:=\{Q_0(w|_T),Q_b(w|_{\pT}), Q_n(\nabla w|_{\pT})\},\qquad \forall T\in\T_h.
$$

The  simplified WG numerical scheme, free from stabilization terms, for solving the biharmonic equation \eqref{model} is formulated as follows:
\begin{algorithm}\label{PDWG1}
Find $u_h=\{u_0, u_b, \bu_g\} \in V_h$ such that $u_b=Q_b\xi$, $\bu_g\cdot\bn=Q_n\nu$ and $\bu_g\cdot\btau=Q_n(\nabla\xi\cdot\btau)$ on $\partial\Omega$, and satisfy:
\begin{equation}\label{WG}
(\partial^2_{w} u_h, \partial^2_{w} v)=(f, v_0), \qquad\forall v=\{v_0, v_b, \bv_g\}\in V_h^0,
\end{equation}
where $\btau\in \mathbb R^2$ is the tangential direction along $\partial\Omega$, and the terms are defined as:
$$
(\partial^2_{w} u_h, \partial^2_{w} v)=\sum_{T\in {\cal T}_h}\sum_{i,j=1}^2 (\partial^2_{ij, w} u_h, \partial^2_{ij, w} v)_T,
$$ 
$$
(f, v_0)=\sum_{T\in {\cal T}_h}(f, v_0)_T.
$$
\end{algorithm}

\section{Solution Existence and Uniqueness}  
Recall that ${\cal T}_h$ is a shape-regular finite element partition of the domain $\Omega$. Consequently, for any  $T\in {\cal T}_h$ and $\phi\in H^1(T)$, the following trace inequality holds \cite{wy3655}: 
\begin{equation}\label{tracein}
 \|\phi\|^2_{\partial T} \leq C(h_T^{-1}\|\phi\|_T^2+h_T \|\nabla \phi\|_T^2).
\end{equation}
If $\phi$ is a polynomial on the element $T\in {\cal T}_h$,  a simpler form of the trace inequality applies \cite{wy3655}: 
\begin{equation}\label{trace}
\|\phi\|^2_{\partial T} \leq Ch_T^{-1}\|\phi\|_T^2.
\end{equation}

For any $v=\{v_0, v_b, \bv_g\}\in V_h$, define the norm: 
\begin{equation}\label{3norm}
\3bar v\3bar= (\partial^2_{w} v, \partial^2_{ w} v) ^{\frac{1}{2}},
\end{equation}
and introduce the discrete  $H^2$- semi-norm: 
\begin{equation}\label{disnorm}
\|v\|_{2, h}=\Big( \sum_{T\in {\cal T}_h} \|\sum_{i,j=1}^2\partial^2_{ij} v_0\|_T^2+h_T^{-3}\|v_0-v_b\|_{\partial T}^2+h_T^{-1}\|\nabla v_0-\bv_g\|_{\partial T}^2\Big)^{\frac{1}{2}}.
\end{equation}
\begin{lemma}\label{norm1}
 For $v=\{v_0, v_b, \bv_g\}\in V_h$, there exists a constant $C$ such that for $i, j=1, 2$,
 $$
 \|\partial^2_{ij} v_0\|_T\leq C\|\partial^2_{ij, w} v\|_T.
 $$
\end{lemma}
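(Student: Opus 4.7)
The plan is to test the defining identity \eqref{2.4new} for $\partial^2_{ij,w,r,T}v$ against a carefully chosen polynomial test function $\varphi$ that annihilates the two boundary contributions and isolates the desired interior quantity $\partial^2_{ij}v_0$. Since $v_0\in P_k(T)$ we have $\partial^2_{ij}v_0\in P_{k-2}(T)$, so the natural candidate is $\varphi=B\,\partial^2_{ij}v_0$, where $B$ is a bubble function on $T$ satisfying $B|_{\partial T}=0$ and $\nabla B|_{\partial T}=0$. This is exactly the place where the paper's use of bubble functions on possibly non-convex polytopal elements enters. I would invoke the existence of such a $B$ as a polynomial of controlled degree, together with the norm-equivalence property
\[
\|p\|_T^2 \;\le\; C\,(B\,p,p)_T \;\le\; C\,\|p\|_T^2 \qquad\forall p\in P_{k-2}(T),
\]
which is standard for shape-regular simplicial bubbles and extends to the non-convex polytopal setting by restricting to an interior sub-simplex of diameter comparable to $h_T$.

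Next I would choose $r$ large enough that $\varphi=B\,\partial^2_{ij}v_0$ lies in $P_r(T)$, plug it into \eqref{2.4new}, and observe that both boundary terms vanish: $\varphi=0$ on $\partial T$ kills the $v_{gi}-\partial_i v_0$ term, while $\partial_j\varphi=(\partial_j B)\,\partial^2_{ij}v_0+B\,\partial^3_{jij}v_0=0$ on $\partial T$ (because $B$ and $\nabla B$ vanish there) kills the $(v_b-v_0)n_i$ term. The identity therefore collapses to
\[
(\partial^2_{ij,w}v,\;B\,\partial^2_{ij}v_0)_T \;=\; (\partial^2_{ij}v_0,\;B\,\partial^2_{ij}v_0)_T.
\]

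Applying Cauchy--Schwarz on the left, together with the pointwise bound $0\le B\le C$ to control $\|B\,\partial^2_{ij}v_0\|_T\le C\|\partial^2_{ij}v_0\|_T$, yields
\[
(\partial^2_{ij,w}v,\;B\,\partial^2_{ij}v_0)_T \;\le\; C\,\|\partial^2_{ij,w}v\|_T\,\|\partial^2_{ij}v_0\|_T.
\]
On the right, the norm-equivalence property gives
\[
(\partial^2_{ij}v_0,\;B\,\partial^2_{ij}v_0)_T \;\ge\; c\,\|\partial^2_{ij}v_0\|_T^2.
\]
Combining the two inequalities and dividing by $\|\partial^2_{ij}v_0\|_T$ (assuming nonzero; the zero case is trivial) produces the asserted estimate.

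The main obstacle is not the algebraic manipulation, which is essentially one line of integration by parts combined with Cauchy--Schwarz, but the construction and properties of the bubble function $B$ on a \emph{non-convex} polytopal element $T$. A naive product-of-linear-forms bubble can fail to be positive on a non-convex domain, so I would anticipate that the actual argument (as foreshadowed in the introduction) relies on selecting $B$ supported on a shape-regular sub-simplex of $T$ and exploiting a polynomial inverse/equivalence inequality there to pass back to $\|\partial^2_{ij}v_0\|_T$ on all of $T$. Everything else in the proof is a direct consequence of \eqref{2.4new} and Cauchy--Schwarz.
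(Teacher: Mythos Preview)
Your proposal is correct and follows essentially the same approach as the paper: test \eqref{2.4new} with $\varphi=\Phi_B\,\partial^2_{ij}v_0$, use that both $\Phi_B$ and $\nabla\Phi_B$ vanish on $\partial T$ to kill the boundary terms, then combine a weighted lower bound with Cauchy--Schwarz. The one place your anticipation diverges from the paper is the construction of the bubble on a non-convex element: rather than restricting to an interior sub-simplex, the paper takes $\Phi_B=l_1^2\cdots l_N^2$, the product of \emph{squared} edge linear forms. Squaring guarantees nonnegativity on all of $T$ (convex or not) and simultaneously forces $\nabla\Phi_B|_{\partial T}=0$; the lower bound $(\partial^2_{ij}v_0,\Phi_B\,\partial^2_{ij}v_0)_T\ge C\|\partial^2_{ij}v_0\|_T^2$ then follows from the existence of a subdomain $\hat T\subset T$ on which $\Phi_B\ge\rho_0>0$ together with a domain inverse inequality for polynomials. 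With this choice the test function has degree $2N+k-2$, which fixes the value of $r$ used throughout the paper.
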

\begin{proof} Let $T\in {\cal T}_h$ be a polytopal element with $N$ edges denoted as  $e_1, \cdots, e_N$. Importantly, $T$ can be non-convex. On each edge $e_i$,  construct a linear function  $l_i(x)$ satisfying  $l_i(x)=0$ on $e_i$ as: 
$$
l_i(x)=\frac{1}{h_T}\overrightarrow{AX}\cdot \bn_i, $$  where  $A$ is a fixed point on $e_i$,  $X$ is any point on $e_i$, $\bn_i$ is the normal vector to  $e_i$, and $h_T$ is the diameter of $T$. 

Define the bubble function for  $T$ as:
 $$
 \Phi_B =l^2_1(x)l^2_2(x)\cdots l^2_N(x) \in P_{2N}(T).
 $$ 
 It is straightforward to verify that $\Phi_B=0$ on  $\partial T$.    The function 
  $\Phi_B$  can be scaled such that $\Phi_B(M)=1$ where   $M$ is the barycenter of  $T$. Additionally, there exists a subdomain $\hat{T}\subset T$ such that $\Phi_B\geq \rho_0$ for some constant $\rho_0>0$.
  
For $v=\{v_0, v_b, \bv_g\}\in V_h$, let $r=2N+k-2$ and choose   $\varphi=\Phi_B \partial^2_{ij} v_0\in P_r(T)$ in \eqref{2.4new}. This yields:
\begin{equation}\label{t1}
\begin{split}
     &(\partial^2_{ij, w}v, \Phi_B \partial^2_{ij} v_0)_T\\=&(\partial^2_{ij}v_0,  \Phi_B  \partial^2_{ij} v_0)_T-
  \langle (v_b-v_0) n_i, \partial_j (\Phi_B  \partial^2_{ij} v_0 )\rangle_{\partial T} \\&+\langle v_{gi}-\partial_i v_0, \Phi_B  \partial^2_{ij} v_0 n_j\rangle_{\partial T}\\
  =& (\partial^2_{ij}v_0,  \Phi_B \partial^2_{ij} v_0)_T,  
\end{split}
  \end{equation}
where  we applied $\Phi_B=0$  on $\partial T$.

Using the domain inverse inequality \cite{wy3655}, there exists a constant $C$ such that 
\begin{equation}\label{t2}
(\partial^2_{ij} v_0, \Phi_B \partial^2_{ij} v_0)_T \geq C (\partial^2_{ij} v_0, \partial^2_{ij} v_0)_T.
\end{equation}

By applying the Cauchy-Schwarz inequality to   \eqref{t1}-\eqref{t2}, we obtain
 $$
 (\partial^2_{ij} v_0, \partial^2_{ij} v_0)_T\leq C (\partial^2_{ij, w} v, \Phi_B \partial^2_{ij} v_0)_T  \leq C  \|\partial^2_{ij, w} v\|_T \|\Phi_B \partial^2_{ij} v_0\|_T  \leq C
\|\partial^2_{ij, w} v\|_T \|\partial^2_{ij} v_0\|_T,
 $$
which implies:
 $$
 \|\partial^2_{ij}v_0\|_T\leq C\|\partial^2_{ij, w} v\|_T.
 $$

This completes the proof.
\end{proof}

\begin{remark}
   If the polytopal element $T$  is convex, 
   the bubble function  in Lemma \ref{norm1}  can be  simplified to:
 $$
 \Phi_B =l_1(x)l_2(x)\cdots l_N(x).
 $$  
This simplified bubble function satisfies (1) $\Phi_B=0$
 on $\partial T$, (2) there exists a subdomain $\hat{T}\subset T$ such that $\Phi_B\geq \rho_0$ for some constant $\rho_0>0$.
The proof of Lemma \ref{norm1} follows the same approach, using this simplified bubble function. In this case, we set $r=N+k-2$.  
\end{remark}

By constructing an  edge-based bubble function,    $$\varphi_{e_k}= \Pi_{i=1, \cdots, N, i\neq k}l_i^2(x),$$   it can be easily verified that (1) $\varphi_{e_k}=0$ on each  edge  $e_i$ for $i \neq k$, and (2) there exists a subdomain $\widehat{e_k}\subset e_k$ such that  $\varphi_{e_k} \geq \rho_1$ for some constant $\rho_1>0$. Let $\varphi=(v_b-v_0)l_k \varphi_{e_k}$. It is straightforward to verify the following properties: (1) $\varphi=0$ on each edge  $e_i$ for $i=1, \cdots, N$,  (2) $\nabla \varphi =0$ on each edge   $e_i$ for $i \neq k$,  and (3) $\nabla \varphi =(v_0-v_b)(\nabla l_k) \varphi_{e_k}=\mathcal{O}( \frac{ (v_0-v_b)\varphi_{e_k}}{h_T}\textbf{C})$  on  $e_k$, where  $\textbf{C}$ is a constant vector.

\begin{lemma}\cite{autobihar}\label{phi}
     For $\{v_0,v_b, \bv_g\}\in V_h$, let $\varphi=(v_b-v_0)l_k \varphi_{e_k}$. The following inequality holds:
\begin{equation}
  \|\varphi\|_T ^2 \leq Ch_T  \int_{e_k}(v_b-v_0)^2ds.
\end{equation}
\end{lemma}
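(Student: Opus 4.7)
My plan is to peel off the two bounded polynomial factors $l_k$ and $\varphi_{e_k}$ from $\varphi$, reducing the claim to a volume-to-edge estimate for the polynomial piece $(v_b-v_0)$.

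First, I would bound $l_k$ and $\varphi_{e_k}$ in $L^\infty(T)$. The linear functions $l_i$ constructed in the proof of Lemma \ref{norm1} satisfy $|\nabla l_i|=1/h_T$ and $l_i|_{e_i}=0$. Since $T$ has diameter $h_T$, a mean value argument gives $|l_i(x)|\le 1$ for every $x\in T$, so in particular $|l_k(x)|\le 1$ and
\begin{equation*}
|\varphi_{e_k}(x)|=\prod_{i\ne k}l_i^2(x)\le 1,\qquad x\in T.
\end{equation*}
Pulling these $L^\infty$ factors out of the integral yields
\begin{equation*}
\|\varphi\|_T^2=\int_T (v_b-v_0)^2 l_k^2 \varphi_{e_k}^2\, dx \le C \|v_b-v_0\|_T^2,
\end{equation*}
where $(v_b-v_0)$ on $T$ is understood via the natural polynomial extension of the edge trace from $e_k$ into $T$, namely the extension that is constant in the direction normal to $e_k$.

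Second, I would convert $\|v_b-v_0\|_T^2$ into an edge integral. With the chosen extension, the polynomial $(v_b-v_0)$ on $T$ depends only on the tangential coordinate along $e_k$. The shape regularity of $T$ bounds the normal width of $T$ over $e_k$ by $Ch_T$, so a direct Fubini-type computation in local coordinates aligned with $e_k$ gives
\begin{equation*}
\|v_b-v_0\|_T^2 \le C h_T \int_{e_k}(v_b-v_0)^2\, ds.
\end{equation*}
Combining the two steps produces the claimed bound.

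The main obstacle will be fixing a precise polynomial extension of $v_b$ from $e_k$ into $T$ so that the norm equivalences above hold with constants depending only on the shape regularity and on the polynomial degrees $k,p,q$. The factors $l_k$ (vanishing on $e_k$) and $\varphi_{e_k}$ (vanishing on every other edge) are crucial: without them the $T$-norm of $\varphi$ could not possibly be controlled by the $e_k$-trace of $v_b-v_0$ alone, since the interior values of $v_0$ are otherwise unconstrained by the edge data.
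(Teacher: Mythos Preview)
Your proposal is correct and follows essentially the same route as the paper. Note that the paper does not prove Lemma~\ref{phi} directly (it is cited from \cite{autobihar}), but the argument it gives for the companion Lemma~\ref{phi2} is exactly the one you outline: extend the edge quantity from $e_k$ into $T$ by composing with the orthogonal projection onto the line through $e_k$ (your ``constant in the normal direction'' extension), bound the polynomial factors $l_k,\varphi_{e_k}$ in $L^\infty(T)$, and then pick up the factor $h_T$ by a Fubini-type integration in local coordinates aligned with $e_k$. The paper is equally terse about the non-convex case; to make your Fubini step completely rigorous you would also note that the tangential projection of $T$ may overshoot $e_k$, but a one-dimensional polynomial inverse inequality (using that $|e_k|\simeq h_T$ by shape regularity) recovers the integral over $e_k$.

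One small correction to your closing remark: for \emph{this} lemma the factors $l_k$ and $\varphi_{e_k}$ are not what make the bound possible. Once $(v_b-v_0)$ is interpreted as the projection extension of its trace on $e_k$, the estimate $\|v_b-v_0\|_T^2\le Ch_T\|v_b-v_0\|_{e_k}^2$ already holds; the factors are bounded by $1$ and simply come along for the ride. Their real role is in the \emph{application} of the lemma in the proof of Lemma~\ref{normeqva}, where one needs $\varphi$ and $\nabla\varphi$ to vanish on the appropriate edges so that the boundary terms in \eqref{2.4new} collapse to a single edge contribution.
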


\begin{lemma}\label{phi2}
     For $\{v_0,v_b, \bv_g\}\in V_h$, let $\varphi=(v_{gi} -\partial_i v_0 )  \varphi_{e_k}$. The following inequality holds:
\begin{equation}
  \|\varphi\|_T ^2 \leq Ch_T \int_{e_k}(v_{gi} -\partial_i v_0 )^2ds.
\end{equation}
\end{lemma}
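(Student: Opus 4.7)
The plan is to prove Lemma \ref{phi2} by adapting the Fubini-type estimate behind Lemma \ref{phi}. I interpret the factor $(v_{gi}-\partial_i v_0)$ appearing in $\varphi$ as the polynomial $(v_{gi}-\partial_i v_0)|_{e_k}$ on $e_k$, extended to $T$ as a polynomial that is constant in the direction perpendicular to $e_k$. Under this convention $\varphi$ is a polynomial on $T$, and it still vanishes on every edge $e_i$ with $i\ne k$ through the factor $\varphi_{e_k}$.

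First I would introduce orthogonal coordinates $(s,t)$ on $\mathbb R^2$, with $s$ parametrising the line containing $e_k$ and $t$ the signed perpendicular distance, so that the extended $(v_{gi}-\partial_i v_0)$ depends only on $s$. Fubini then yields
\[
\|\varphi\|_T^2 \;=\; \int_{e_k}(v_{gi}-\partial_i v_0)(s)^2\,\Big(\int_{I_s}\varphi_{e_k}(s,t)^2\,dt\Big)\,ds,
\]
where $I_s=\{t:(s,t)\in T\}$ is the cross-section of $T$ at $s$.

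The remaining step is a uniform bound $\int_{I_s}\varphi_{e_k}^2\,dt\le C h_T$. Two elementary facts give this. First, $l_i(x)=\overrightarrow{AX}\cdot\bn_i/h_T$ satisfies $|l_i(x)|\le|\overrightarrow{AX}|/h_T\le 1$ for $x\in T$, so $\varphi_{e_k}=\prod_{i\ne k}l_i^2$ is bounded by $1$ pointwise on $T$. Second, since $\operatorname{diam}(T)=h_T$, any two points $(s,t_1),(s,t_2)\in T$ satisfy $|t_1-t_2|\le h_T$, so the Lebesgue measure of $I_s$ is at most $h_T$ even when $T$ is non-convex and $I_s$ decomposes into several disjoint intervals. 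Substituting back in the Fubini identity completes the proof.

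The only subtle point is the possible disconnectedness of $I_s$ in the non-convex case, but since only the total measure $|I_s|\le h_T$ enters the argument, this is harmless. Note that compared with Lemma \ref{phi} the factor $l_k$ is absent here; its role in the earlier lemma was to force $\varphi$ to vanish on $e_k$ for later use, not to help the $L^2$ estimate, so its removal does not affect this argument.
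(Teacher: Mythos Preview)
Your approach is essentially the paper's: extend $(v_{gi}-\partial_i v_0)|_{e_k}$ to be constant in the direction normal to $e_k$ and then slice $T$ by Fubini. The paper phrases the extension as $\bv_g(X)=\bv_g(\mathrm{Proj}_{e_k}X)$ (and similarly for the trace of $v_0$), but this is exactly your ``constant in $t$'' convention, and your pointwise bound $|\varphi_{e_k}|\le 1$ together with $|I_s|\le h_T$ makes the inner-integral estimate more explicit than the paper's terse ``properties of the projection''.

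There is, however, a genuine gap in your Fubini step. You assert
\[
\|\varphi\|_T^2=\int_{e_k}(v_{gi}-\partial_i v_0)(s)^2\Big(\int_{I_s}\varphi_{e_k}(s,t)^2\,dt\Big)\,ds,
\]
but Fubini actually gives this with the outer integral taken over $\mathrm{proj}_s(T)$, not over $e_k$. These need not coincide: for the triangle with vertices $(0,0)$, $(1,0)$, $(2,1)$ and $e_k$ the base segment, the projection of $T$ onto the $s$-axis is $[0,2]$, so for $s\in(1,2]$ the fibre $I_s$ is nonempty and those contributions are missing from your right-hand side. The paper is aware of this, explicitly extending the edge polynomial from $e_k$ to the whole line $H\supset e_k$ before projecting. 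To close your argument you need one additional step: since $g(s):=(v_{gi}-\partial_i v_0)(s)$ is a one-variable polynomial of fixed degree, $|\mathrm{proj}_s(T)|\le h_T$, and $|e_k|\ge \rho\,h_T$ by shape regularity, a one-dimensional inverse (domain-extension) inequality yields
\[
\int_{\mathrm{proj}_s(T)} g(s)^2\,ds \;\le\; C\int_{e_k} g(s)^2\,ds,
\]
after which your bound $\int_{I_s}\varphi_{e_k}^2\,dt\le h_T$ finishes the proof.
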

\begin{proof}
Define the extension of $\bv_g$, originally defined on the edge  $e_k$, to the entire  polytopal element $T$ as: 
$$
\bv_g(X)= \bv_g(Proj_{e_k} (X)),
$$
where $X=(x_1, x_2)$ is any point in   $T$, $Proj_{e_k} (X)$ denotes the orthogonal projection of   $X$ onto the  plane $H\subset\mathbb R^2$ containing     $e_k$.  If  $Proj_{e_k} (X)$ is not on  $e_k$, $\bv_g(Proj_{e_k} (X))$ is defined as the extension of $\bv_g$ from $e_k$ to   $H$. The extension preserves the polynomial nature of $\bv_g$  as demonstrated in   \cite{autobihar}.

Let $v_{trace}$ denote the trace of $v_0$  on   $e_k$. Define its extension to  $T$ as:
$$
 v_{trace} (X)= v_{trace}(Proj_{e_k} (X)).
$$
This extension is also polynomial, as demonstrated in   \cite{autobihar}.

Let $\varphi=(v_{gi} -\partial_i v_0 )  \varphi_{e_k}$. Then,
\begin{equation*}
    \begin{split}
\|\varphi\|^2_T  =
\int_T \varphi^2dT = 
&\int_T ((v_{gi} -\partial_i v_0 )(X)  \varphi_{e_k})^2dT\\
\leq &Ch_T \int_{e_k} ((v_{gi} -\partial_i v_0 )(Proj_{e_k} (X)) \varphi_{e_k})^2dT\\ 
\\\leq &Ch_T \int_{e_k}  (v_{gi} -\partial_i v_0 ) ^2ds, 
    \end{split}
\end{equation*} 
where we used the facts that (1) $\varphi_{e_k}=0$ on each  edge   $e_i$ for $i \neq k$,  (2) there exists a subdomain $\widehat{e_k}\subset e_k$ such that  $\varphi_{e_k} \geq \rho_1$ for some constant $\rho_1>0$, 
and applied the properties of the projection.

 This completes the proof of the lemma.

\end{proof}

\begin{lemma}\label{normeqva}   There exist  two positive constants, $C_1$ and $C_2$, such that for any $v=\{v_0, v_b, \bv_g\} \in V_h$,   the following equivalence holds: 
 \begin{equation}\label{normeq}
 C_1\|v\|_{2, h}\leq \3bar v\3bar  \leq C_2\|v\|_{2, h}.
\end{equation}
\end{lemma}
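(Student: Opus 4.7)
The plan is to prove the two inequalities in (\ref{normeq}) separately. For the upper bound $\3bar v\3bar\le C_2\|v\|_{2,h}$, I would test the identity (\ref{2.4new}) on each element $T$ with the choice $\varphi=\partial^2_{ij,w}v\in P_r(T)$, obtaining
\begin{equation*}
\|\partial^2_{ij,w}v\|_T^2=(\partial^2_{ij}v_0,\partial^2_{ij,w}v)_T-\langle(v_b-v_0)n_i,\partial_j(\partial^2_{ij,w}v)\rangle_{\partial T}+\langle v_{gi}-\partial_i v_0,(\partial^2_{ij,w}v)n_j\rangle_{\partial T}.
\end{equation*}
Then Cauchy--Schwarz on each term, the polynomial trace inequality (\ref{trace}) on $\partial T$, and the standard inverse inequality applied to $\partial_j(\partial^2_{ij,w}v)$ introduce the expected powers $h_T^{-3/2}$ and $h_T^{-1/2}$, so dividing through by $\|\partial^2_{ij,w}v\|_T$, squaring, and summing over $i,j,T$ gives the upper estimate.

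For the lower bound, Lemma \ref{norm1} already controls the interior term $\|\partial^2_{ij}v_0\|_T$ by $\|\partial^2_{ij,w}v\|_T$. What remains is to bound the two boundary jump contributions $h_T^{-3}\|v_0-v_b\|_{\partial T}^2$ and $h_T^{-1}\|\nabla v_0-\bv_g\|_{\partial T}^2$ by $\3bar v\3bar^2$, edge by edge. For a fixed edge $e_k\subset\partial T$ and the function-value jump, I would take the bubble-type test function $\varphi=(v_b-v_0)l_k\varphi_{e_k}$ introduced before Lemma \ref{phi}. Since $\varphi$ vanishes on $\partial T$ and $\nabla\varphi$ vanishes on every edge except $e_k$, only the $\nabla l_k=\bn_k/h_T$ contribution survives in (\ref{2.4new}). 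Specializing $i=j$ and summing on $i$ (so that $\sum_i n_{k,i}^2=1$) yields the identity
\begin{equation*}
\tfrac{1}{h_T}\int_{e_k}(v_b-v_0)^2\varphi_{e_k}\,ds=\sum_{i}\big[(\partial^2_{ii}v_0,\varphi)_T-(\partial^2_{ii,w}v,\varphi)_T\big].
\end{equation*}
Using $\varphi_{e_k}\ge\rho_1$ on $\widehat{e_k}$ together with a domain inverse inequality on the polynomial $v_b-v_0$ gives a lower bound by $C\|v_b-v_0\|_{e_k}^2/h_T$, while Cauchy--Schwarz, Lemma \ref{phi}, and Lemma \ref{norm1} give an upper bound of the right-hand side by $Ch_T^{1/2}\|v_b-v_0\|_{e_k}\sum_i\|\partial^2_{ii,w}v\|_T$. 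Cancellation yields $h_T^{-3/2}\|v_b-v_0\|_{e_k}\le C\sum_i\|\partial^2_{ii,w}v\|_T$.

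For the gradient-jump term I would take $\varphi=(v_{gi}-\partial_i v_0)\varphi_{e_k}$, where $v_{gi}-\partial_i v_0$ is extended polynomially to $T$ as in Lemma \ref{phi2}, so that $\varphi$ vanishes on every edge $e_j\ne e_k$. Plugging into (\ref{2.4new}) and forming the linear combination $\sum_j n_{k,j}(\partial^2_{ij,w}v,\varphi)_T$ leverages $\sum_j n_{k,j}^2=1$ to extract the target integral $\int_{e_k}(v_{gi}-\partial_i v_0)^2\varphi_{e_k}\,ds$. The main obstacle will be the stray boundary term $\sum_j n_{k,j}\langle(v_b-v_0)n_i,\partial_j\varphi\rangle_{e_k}$, which I intend to handle by estimating $|\partial_j\varphi|\le C h_T^{-1}|v_{gi}-\partial_i v_0|$ on $e_k$ (polynomial inverse estimate on the extended polynomial) and then absorbing $\|v_b-v_0\|_{e_k}$ via the bound just obtained in the previous paragraph with scaling $h_T^{-3/2}$. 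Combining this with Lemma \ref{phi2} ($\|\varphi\|_T\le Ch_T^{1/2}\|v_{gi}-\partial_i v_0\|_{e_k}$), Lemma \ref{norm1}, and the domain inverse inequality on $\varphi_{e_k}$, cancellation gives $h_T^{-1/2}\|v_{gi}-\partial_i v_0\|_{e_k}\le C(\sum_{i,j}\|\partial^2_{ij,w}v\|_T^2)^{1/2}$. Squaring and summing over edges of $T$ and over $T\in\T_h$ then closes the lower bound and completes the proof.
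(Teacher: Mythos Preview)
Your proposal is correct and follows essentially the same route as the paper's proof: the upper bound by testing (\ref{2.4new}) with $\varphi=\partial^2_{ij,w}v$, and the lower bound via the two edge-bubble choices $\varphi=(v_b-v_0)l_k\varphi_{e_k}$ and $\varphi=(v_{gi}-\partial_i v_0)\varphi_{e_k}$ combined with Lemmas~\ref{norm1}, \ref{phi}, \ref{phi2}, absorbing the cross term in the gradient-jump step using the already-established bound on $\|v_b-v_0\|_{e_k}$. One small caveat: the pointwise claim $|\partial_j\varphi|\le C h_T^{-1}|v_{gi}-\partial_i v_0|$ on $e_k$ is not literally valid (a polynomial can vanish where its derivative does not), but the $L^2(e_k)$ version you actually need follows from the trace inequality (\ref{trace}), an inverse estimate, and Lemma~\ref{phi2}, exactly as in the paper; your explicit summations exploiting $\sum_i n_{k,i}^2=1$ are in fact a cleaner way to extract the sign-definite boundary integral than the paper's informal constant in (\ref{t33})--(\ref{t3}).
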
 

\begin{proof}   
Consider the edge-based bubble function defined as 
$$\varphi_{e_k}= \Pi_{i=1, \cdots, N, i\neq k}l_i^2(x).$$

First, extend $v_b$ from the edge  $e_k$ to the element $T$. 
Similarly, let  $v_{trace}$ denote the trace of $v_0$ on the edge  $e_k$ and extend $v_{trace}$ to the element $T$. For simplicity, we continue to use $v_b$ and $v_0$ to represent their respective extensions. Details of these extensions can be found in Lemma \ref{phi2}. Substituting $\varphi=(v_b-v_0)l_k\varphi_{e_k}$ into \eqref{2.4new}, we obtain
  \begin{equation} \label{t33}
 \begin{split} 
    (\partial^2_{ij, w}v, \varphi)_T =& (\partial^2_{ij}v_0,  \varphi)_T-
  \langle (v_b-v_0) n_i, \partial_j \varphi \rangle_{\partial T}+\langle v_{gi}-\partial_i v_0, \varphi n_j\rangle_{\partial T}\\
  =& (\partial^2_{ij}v_0,  \varphi)_T +  Ch_T^{-1}\int_{e_k} |v_b-  v_0|^2 \varphi_{e_k} ds,
  \end{split}
  \end{equation}  
where we used (1) $\varphi=0$ on each edge  $e_i$ for $i=1, \cdots, N$,  (2) $\nabla \varphi =0$ on each edge   $e_i$ for $i \neq k$,  and (3) $\nabla \varphi =(v_0-v_b)(\nabla l_k) \varphi_{e_k}=\mathcal{O}( \frac{ (v_0-v_b)\varphi_{e_k}}{h_T}\textbf{C})$  on  $e_k$, where  $\textbf{C}$ is a constant vector.
  
Recall that  (1) $\varphi_{e_k}=0$ on each  edge   $e_i$ for $i \neq k$, and (2) there exists a subdomain $\widehat{e_k}\subset e_k$ such that  $\varphi_{e_k} \geq \rho_1$ for some constant $\rho_1>0$.
Using Cauchy-Schwarz inequality, the domain inverse inequality \cite{wy3655},  \eqref{t33} and Lemma \ref{phi}, we deduce:
\begin{equation*}
\begin{split}
  \int_{e_k}|v_b-  v_0|^2  ds\leq & C\int_{e_k} |v_b-  v_0|^2 \varphi_{e_k} ds 
  \\ \leq& C h_T(\|\partial^2_{ij, w} v\|_T+\|\partial^2_{ij} v_0\|_T){ \| \varphi\|_T}\\
 \leq & C { h_T^{\frac{3}{2}}} (\|\partial^2_{ij, w} v\|_T+\|\partial^2_{ij} v_0\|_T){ (\int_{e_k}|v_b- v_0|^2ds)^{\frac{1}{2}}},
 \end{split}
\end{equation*}
which, from Lemma \ref{norm1}, leads to:
\begin{equation}\label{t21}
 h_T^{-3}\int_{e_k}|v_b-  v_0|^2  ds \leq C  (\|\partial^2_{ij, w} v\|^2_T+\|\partial^2_{ij} v_0\|^2_T)\leq C\|\partial^2_{ij, w} v\|^2_T.   
\end{equation} 
Next,  extend $\bv_g$ from the edge  $e_k$ to the element $T$, denoting the extension by the same symbol for simplicity.   Details of this extension are in Lemma \ref{phi2}.
Substituting $\varphi=(v_{gi}-\partial_i v_0)\varphi_{e_k}$ into \eqref{2.4new},  we obtain: 
  \begin{equation} \label{t3}
 \begin{split} & (\partial^2_{ij, w}v, \varphi)_T\\=& (\partial^2_{ij}v_0,  \varphi)_T-
  \langle (v_b-v_0) n_i, \partial_j \varphi \rangle_{\partial T}+\langle v_{gi}-\partial_i v_0, \varphi n_j\rangle_{\partial T}\\
  =& (\partial^2_{ij}v_0,  \varphi)_T -
  \langle (v_b-v_0) n_i, \partial_j \varphi \rangle_{\partial T}+\int_{e_k} |v_{gi}-\partial_i v_0|^2  \varphi_{e_k}ds,
  \end{split}
  \end{equation}  
  where we used  $\varphi_{e_k} =0$ on edge  $e_i$ for $i \neq k$, 
and the fact that there exists a sub-domain $\widehat{e_k}\subset e_k$ such that  $\varphi_{e_k} \geq \rho_1$ for some constant $\rho_1>0$.
This, together with   Cauchy-Schwarz inequality, the domain inverse inequality \cite{wy3655},  the inverse inequality, the trace inequality \eqref{trace}, \eqref{t21} and Lemma \ref{phi2}, gives
 \begin{equation*} 
\begin{split}
&  \int_{e_k}|v_{gi}-\partial_i v_0|^2  ds\\\leq &C  \int_{e_k}|v_{gi}-\partial_i v_0|^2  \varphi_{e_k}ds\\
  \leq & C (\|\partial^2_{ij, w} v\|_T+\|\partial^2_{ij} v_0\|_T)\| \varphi\|_T+  C\|v_0-v_b\|_{\partial T}\|\partial_j \phi\|_{\partial T}\\
 \leq & C h_T^{\frac{1}{2}} (\|\partial^2_{ij, w} v\|_T+\|\partial^2_{ij} v_0\|_T)(\int_{e_k}|v_{gi}-\partial_i v_0|^2ds)^{\frac{1}{2}}\\&+ C h_T^{\frac{3}{2}}  \|\partial^2_{ij, w} v\|_T  h_T^{-1}(\int_{e_k}|v_{gi}-\partial_i v_0|^2ds)^{\frac{1}{2}}. 
 \end{split}
\end{equation*} 
Applying Lemma \ref{norm1}, gives 
\begin{equation} \label{t11}
 h_T^{-1}\int_{e_k}|v_{gi}-\partial_i v_0|^2  ds \leq C  (\|\partial^2_{ij, w} v\|^2_T+\|\partial^2_{ij} v_0\|^2_T)\leq C\|\partial^2_{ij, w} v\|^2_T.
\end{equation}
  Using Lemma \ref{norm1},  equations  \eqref{t21}, and \eqref{t11}, \eqref{3norm} and \eqref{disnorm},  we deduce:
$$
 C_1\|v\|_{2, h}\leq \3bar v\3bar.
$$

Finally, using the Cauchy-Schwarz inequality, inverse inequalities, and the trace inequality \eqref{trace} in \eqref{2.4new}, we derive: 
\begin{equation*}
    \begin{split}
  \Big| (\partial^2_{ij, w}v, \varphi)_T\Big| \leq &\|\partial^2_{ij}v_0\|_T \|  \varphi\|_T+
 \|(v_b-v_0) n_i\|_{\partial T} \| \partial_j\varphi\|_{\partial T}+\|v_{gi}-\partial_i v_0\|_{\partial T} \|\varphi n_j\|_{\partial T} \\
 \leq &\|\partial^2_{ij}v_0\|_T \|  \varphi\|_T+
 h_T^{-\frac{3}{2}}\|v_b-v_0\|_{\partial T} \|  \varphi\|_{  T}+h_T^{-\frac{1}{2}}\|v_{gi}-\partial_i v_0\|_{\partial T} \|\varphi  \|_{T},
    \end{split}
\end{equation*}
which gives:
$$
\| \partial^2_{ij, w}v\|_T^2\leq C( \|\partial^2_{ij}v_0\|^2_T  +
 h_T^{-3}\|v_b-v_0\|^2_{\partial T}+h_T^{-1}\|v_{gi}-\partial_i v_0\|^2_{\partial T}),
$$
 and further gives $$ \3bar v\3bar  \leq C_2\|v\|_{2, h}.$$

 This completes the proof. 
 \end{proof}
  
\begin{theorem}
The  WG scheme \ref{PDWG1} admits  a unique solution. 
\end{theorem}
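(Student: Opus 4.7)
Since Algorithm \ref{PDWG1} is a square linear system (the number of unknowns equals the number of equations), it suffices to establish uniqueness: the plan is to show that if $f\equiv 0$, $\xi\equiv 0$, $\nu\equiv 0$, then $u_h\equiv 0$. I would proceed by energy argument and then propagate the resulting rigidity to each component.

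First, with homogeneous data we have $u_b=0$ and $\bu_g=0$ on $\partial\Omega$, so $u_h\in V_h^0$. Taking the test function $v=u_h$ in \eqref{WG} yields $(\partial^2_w u_h,\partial^2_w u_h)=0$, and hence $\3bar u_h\3bar=0$. Invoking the norm equivalence established in Lemma \ref{normeqva}, this immediately gives $\|u_h\|_{2,h}=0$. From the definition \eqref{disnorm} of the discrete $H^2$ semi-norm, it follows that on every $T\in\T_h$
\begin{equation*}
\sum_{i,j=1}^2\partial^2_{ij}u_0=0\text{ in }T,\qquad u_0=u_b\text{ on }\partial T,\qquad \nabla u_0=\bu_g\text{ on }\partial T.
\end{equation*}
Strictly speaking, the first identity only controls the trace of the Laplacian; however, returning to Lemma \ref{norm1} and the definition of $\3bar\cdot\3bar$, one has $\|\partial^2_{ij}u_0\|_T\le C\|\partial^2_{ij,w}u_h\|_T=0$ for each $i,j$, so in fact every second-order partial derivative of $u_0$ vanishes on each element and $u_0|_T$ is a polynomial of degree at most one.

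The final step is global propagation. Because $\bu_g$ is single-valued across each interior edge $e\in\E_h^0$ and $\nabla u_0=\bu_g$ on $\partial T$ from both sides of $e$, the piecewise linear function $u_0$ has a continuous gradient across $\E_h^0$; similarly $u_0=u_b$ forces continuity of $u_0$ itself across every interior edge. Hence $u_0$ is a globally affine function on $\Omega$. The homogeneous boundary conditions $u_b|_{\partial\Omega}=0$ and $\bu_g|_{\partial\Omega}=0$, combined with $u_0=u_b$ and $\nabla u_0=\bu_g$ on $\partial T$ for elements touching $\partial\Omega$, give $u_0=0$ and $\nabla u_0=0$ on $\partial\Omega$, so the affine function $u_0$ vanishes identically. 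Consequently $u_b=0$ and $\bu_g=0$ on every edge, and $u_h\equiv 0$, which completes the proof.

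The main obstacle is the step where I identify $u_0$ as affine on each element: $\3bar u_h\3bar=0$ controls only the discrete weak Hessian, not the classical Hessian of $u_0$. This is precisely what Lemma \ref{norm1} (based on the bubble function argument valid on non-convex polytopes) resolves. Once that reduction is in hand, the remainder is purely algebraic bookkeeping.
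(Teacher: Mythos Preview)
Your proof is correct and follows essentially the same route as the paper: reduce to the homogeneous problem (the paper subtracts two solutions instead, which is equivalent), set $v=u_h$ to obtain $\3bar u_h\3bar=0$, invoke the norm equivalence of Lemma~\ref{normeqva} together with Lemma~\ref{norm1} to conclude that $u_0$ is elementwise affine with $u_0=u_b$ and $\nabla u_0=\bu_g$ on each $\partial T$, and then propagate through the single-valuedness of $u_b,\bu_g$ and the homogeneous boundary data. Your explicit appeal to Lemma~\ref{norm1} to get $\partial^2_{ij}u_0=0$ for each pair $(i,j)$ is a sensible precaution given how the seminorm \eqref{disnorm} is written; the paper simply reads this off from $\|u_h\|_{2,h}=0$.
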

\begin{proof}
Assume that $u_h^{(1)}\in V_h$ and $u_h^{(2)}\in V_h$ are two  distinct solutions of the WG scheme \ref{PDWG1}. Define  $\eta_h= u_h^{(1)}-u_h^{(2)}\in V_h^0$. Then, $\eta_h$ satisfies 
$$
( \partial^2_{ij, w} \eta_h,  \partial^2_{ij, w} v)=0, \qquad \forall v\in V_h^0.
$$
Choosing $v=\eta_h$  yields $\3bar \eta_h\3bar=0$. From the equivalence of norms in  \eqref{normeq},   it follows that $\|\eta_h\|_{2,h}=0$, which yields
$\partial^2_{ij} \eta_0=0$ for $i,j =1, 2$ on each $T$,  $\eta_0=\eta_b$ and $\nabla \eta_0=\beta_g$ on each $\partial T$. Consequently,  $\eta_0$ is a linear function on each  element $T$ and $\nabla \eta_0=C$  on each  $T$.

Since $\nabla \eta_0=\beta_g$ on each $\partial T$,  it follows that  $\nabla \eta_0$ is continuous across the entire domain $\Omega$. Thus,  $\nabla \eta_0=C$ throughout $\Omega$.  Furthermore, the condition $\beta_g=0$ on $\partial\Omega$ implies $\nabla \eta_0=0$ in $\Omega$ and $\beta_g=0$ on each $\partial T$. Therefore, $\eta_0$ is a constant on each element $T$. 

Since $\eta_0=\eta_b$ on $\partial T$,  the continuity of  $\eta_0$  over $\Omega$ implies $\eta_0$ is globally constant. From   $\eta_b=0$ on $\partial\Omega$, we conclude $\eta_0=0$ throughout $\Omega$. Consequently, $\eta_b=\eta_0=0$ on each $\partial T$, which implies $\eta_h\equiv 0$ in  $\Omega$. Thus, 
 $u_h^{(1)}\equiv u_h^{(2)}$, proving the uniqueness of the solution.
\end{proof}

\section{Error Equations}
Let $Q_r$ denote the $L^2$ projection operator onto the finite element space of piecewise polynomials of degree at most $r$.

\begin{lemma}\label{Lemma5.1}   The following property holds: 
\begin{equation}\label{pro}
\partial^2_{ij, w}u =Q_r(\partial^2_{ij} u), \qquad \forall u\in H^2(T).
\end{equation}
\end{lemma}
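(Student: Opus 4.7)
The plan is to view any $u\in H^2(T)$ as the weak function $\{u,\,u|_{\partial T},\,\nabla u|_{\partial T}\}\in W(T)$, i.e.\ to take the natural embedding where the boundary components coincide with the traces of $u$ and of $\nabla u$. With this identification, I will test the claimed identity against an arbitrary $\varphi\in P_r(T)$, reduce it to a trivial equality via the consistency form (\ref{2.4new}), and conclude using the defining orthogonality of the $L^2$ projection $Q_r$.

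The first step is to note that under the natural embedding the two ``jump'' quantities in (\ref{2.4new}) are identically zero on $\partial T$:
$$v_b - v_0 = u|_{\partial T} - u|_{\partial T} = 0,\qquad v_{gi} - \partial_i v_0 = \partial_i u|_{\partial T} - \partial_i u|_{\partial T} = 0.$$
Hence both boundary integrals drop and (\ref{2.4new}) collapses to
$$(\partial^2_{ij,w}u,\varphi)_T = (\partial^2_{ij}u,\varphi)_T,\qquad \forall\,\varphi\in P_r(T).$$

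The second step is to rewrite the right-hand side using the $L^2$ projection: by the defining property of $Q_r$ onto $P_r(T)$, $(\partial^2_{ij}u,\varphi)_T=(Q_r(\partial^2_{ij}u),\varphi)_T$ for every $\varphi\in P_r(T)$. Since $\partial^2_{ij,w}u\in P_r(T)$ by construction and $Q_r(\partial^2_{ij}u)\in P_r(T)$, their difference lies in $P_r(T)$ and is $L^2$-orthogonal to every polynomial in $P_r(T)$; testing this difference against itself gives a polynomial of zero $L^2(T)$ norm, forcing equality and establishing (\ref{pro}).

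The argument has essentially no technical obstacle — it is just integration by parts (already absorbed into the passage from (\ref{2.4}) to (\ref{2.4new})) followed by orthogonality. The only point that deserves explicit mention, and the single subtlety a referee might probe, is the identification of a smooth $u$ with a weak function in $W(T)$: one must choose the boundary components to be the actual traces so that the jumps vanish. Under this identification, note that \emph{no} degree compatibility between $r$ and the discretisation parameters $k,p,q$ is required, because no $L^2$ projection is being commuted past a derivative in the argument.
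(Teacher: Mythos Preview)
Your proof is correct and follows essentially the same route as the paper: identify $u\in H^2(T)$ with the weak function $\{u,\,u|_{\partial T},\,\nabla u|_{\partial T}\}$, apply (\ref{2.4new}) so that the two boundary terms vanish, and then use the defining property of $Q_r$ to conclude. The paper's argument is line-for-line the same, only slightly more terse in that it does not spell out the final uniqueness step.
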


\begin{proof} For any $u\in H^2(T)$,  using \eqref{2.4new}, we have  
 \begin{equation*} 
  \begin{split}
 &(\partial^2_{ij, w}u, \varphi)_T\\
  =&(\partial^2_{ij}u,  \varphi)_T-
  \langle (u|_{\partial T}-u|_T) n_i, \partial_j \varphi \rangle_{\partial T}+\langle (\nabla u|_{\partial T})_{i} -\partial_i (u|_{T}), \varphi n_j\rangle_{\partial T}\\
  =&(\partial^2_{ij}u,  \varphi)_T=(Q_r(\partial^2_{ij}u),  \varphi)_T,
   \end{split}
   \end{equation*} 
  for all $\varphi\in P_r(T)$.  This  completes the proof.
  \end{proof}

Let  $u$ be the exact solution of the biharmonic equation \eqref{model}, and 
  $u_h \in V_{h}$  its numerical approximation obtained from the WG scheme \ref{PDWG1}. The error function, denoted by  $e_h$, is defined as 
\begin{equation}\label{error} 
e_h=u-u_h.
\end{equation}

\begin{lemma}\label{errorequa}
The error function $e_h$ defined in (\ref{error}) satisfies the following error equation:
\begin{equation}\label{erroreqn}
(\partial_{w}^2 e_h, \partial_{w}^2 v)=\ell (u, v), \qquad \forall v\in V_h^0,
\end{equation}
where 
$$
\ell (u, v)=\sum_{T\in {\cal T}_h}\sum_{i,j=1}^2 -  \langle (v_b-v_0) n_i, \partial_j ((Q_r-I) \partial_{ij}^2 u) \rangle_{\partial T}+\langle v_{gi}-\partial_i v_0, (Q_r-I) \partial_{ij}^2 u n_j\rangle_{\partial T}.
$$
\end{lemma}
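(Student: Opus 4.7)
\medskip

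\noindent\textbf{Proof plan for Lemma \ref{errorequa}.}

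The strategy is to compute $(\partial_w^2 u, \partial_w^2 v)$ for $v\in V_h^0$ (where $u$ is identified with its canonical weak representation $\{u|_T, u|_{\partial T}, \nabla u|_{\partial T}\}$), and then subtract the WG equation \eqref{WG} for $u_h$.

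First I would use Lemma \ref{Lemma5.1} to rewrite $(\partial^2_{ij,w}u,\partial^2_{ij,w}v)_T = (Q_r(\partial^2_{ij}u),\partial^2_{ij,w}v)_T$. Since $\partial^2_{ij,w}v\in P_r(T)$, applying \eqref{2.4new} with the test polynomial $\varphi=Q_r(\partial^2_{ij}u)\in P_r(T)$ gives
\begin{equation*}
(\partial^2_{ij,w}u,\partial^2_{ij,w}v)_T
=(\partial^2_{ij}v_0,Q_r\partial^2_{ij}u)_T
-\langle(v_b-v_0)n_i,\partial_j Q_r\partial^2_{ij}u\rangle_{\partial T}
+\langle v_{gi}-\partial_iv_0,Q_r\partial^2_{ij}u\,n_j\rangle_{\partial T}.
\end{equation*}
Because $\partial^2_{ij}v_0\in P_{k-2}(T)\subset P_r(T)$, the volume term equals $(\partial^2_{ij}v_0,\partial^2_{ij}u)_T$. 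Next I split $Q_r\partial^2_{ij}u=\partial^2_{ij}u+(Q_r-I)\partial^2_{ij}u$ in the two boundary terms; the contributions containing $(Q_r-I)\partial^2_{ij}u$ assemble, after summing over $i,j$ and $T$, to exactly $\ell(u,v)$.

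What remains is to show that the \emph{exact} boundary terms together with the volume pieces collapse to $(f,v_0)$. I would apply integration by parts twice, elementwise, to the quantity $\sum_{i,j}(\partial^2_{ij}v_0,\partial^2_{ij}u)_T$, producing
\begin{equation*}
\sum_{i,j}(\partial^2_{ij}v_0,\partial^2_{ij}u)_T
=(\Delta^2 u,v_0)_T-\langle v_0,\partial_{\bn}\Delta u\rangle_{\partial T}
+\sum_{i,j}\langle\partial_iv_0,\partial^2_{ij}u\,n_j\rangle_{\partial T}.
\end{equation*}
Using $\Delta^2 u=f$ and combining with the exact boundary traces from the expansion above, I get cancellations of the $v_0$ and $\nabla v_0$ traces, leaving on each $T$ precisely
\begin{equation*}
(f,v_0)_T-\langle v_b,\partial_{\bn}\Delta u\rangle_{\partial T}+\langle\bv_g,\nabla^2u\,\bn\rangle_{\partial T}.
\end{equation*}

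Finally I would sum over $T$ and argue that the last two boundary sums vanish globally: on every interior edge the outward normals of the two adjacent elements are opposite while $v_b$ and $\bv_g$ are single-valued by the conformity imposed in $V_h$, so the two contributions cancel; on $\partial\Omega$, $v_b=0$ and $\bv_g=0$ since $v\in V_h^0$. This yields $(\partial_w^2 u,\partial_w^2 v)=(f,v_0)+\ell(u,v)$, and subtracting the WG identity $(\partial_w^2 u_h,\partial_w^2 v)=(f,v_0)$ produces \eqref{erroreqn}. The main obstacle is the careful bookkeeping of multi-indexed boundary terms across the two integrations by parts and the splitting $Q_r=(Q_r-I)+I$; once the right index contractions ($\sum_j\partial_j\partial^2_{ij}u=\partial_i\Delta u$ and $\sum_j\partial^2_{ij}u\,n_j=(\nabla^2 u\,\bn)_i$) are identified, the cancellations are mechanical.
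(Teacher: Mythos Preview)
Your proposal is correct and follows essentially the same approach as the paper: invoke Lemma~\ref{Lemma5.1} to replace $\partial^2_{ij,w}u$ by $Q_r\partial^2_{ij}u$, expand via \eqref{2.4new}, use $\partial^2_{ij}v_0\in P_{k-2}(T)\subset P_r(T)$ on the volume term, integrate by parts twice, and cancel the interior-edge contributions using single-valuedness of $v_b,\bv_g$ together with $v_b=\bv_g=0$ on $\partial\Omega$. The only cosmetic difference is ordering: you split $Q_r=(Q_r-I)+I$ in the boundary terms before integrating by parts, whereas the paper integrates by parts first and then isolates the $(Q_r-I)$ contributions; the computations are otherwise identical.
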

\begin{proof}   Using \eqref{pro}, standard  integration by parts, and substituting  $\varphi= Q_r \partial_{ij}^2 u$ into \eqref{2.4new}, we obtain 
\begin{equation}\label{54}
\begin{split}
&\sum_{T\in {\cal T}_h}\sum_{i,j=1}^2(\partial_{ij, w}^2 u, \partial_{ij, w}^2 v)_T\\=&\sum_{T\in {\cal T}_h}\sum_{i,j=1}^2(Q_r \partial_{ij}^2 u, \partial_{ij, w}^2 v)_T\\ 
=&\sum_{T\in {\cal T}_h}\sum_{i,j=1}^2(\partial^2_{ij}v_0,  Q_r \partial_{ij}^2 u)_T-
  \langle (v_b-v_0) n_i, \partial_j (Q_r \partial_{ij}^2 u) \rangle_{\partial T}\\&+\langle v_{gi}-\partial_i v_0, Q_r \partial_{ij}^2 u n_j\rangle_{\partial T}\\
=& \sum_{T\in {\cal T}_h}\sum_{i,j=1}^2(\partial^2_{ij}v_0,   \partial_{ij}^2 u)_T-
  \langle (v_b-v_0) n_i, \partial_j (Q_r \partial_{ij}^2 u) \rangle_{\partial T}\\&+\langle v_{gi}-\partial_i v_0, Q_r \partial_{ij}^2 u n_j\rangle_{\partial T}\\
=& \sum_{T\in {\cal T}_h}\sum_{i,j=1}^2 ((\partial^2_{ij})^2u, v_0)_T+\langle \partial_{ij}^2 u, \partial_i v_0\cdot n_j\rangle_{\partial T}-\langle \partial_j(\partial_{ij}^2u)\cdot n_i, v_0\rangle_{\partial T}\\
&-  \langle (v_b-v_0) n_i, \partial_j (Q_r \partial_{ij}^2 u) \rangle_{\partial T}+\langle v_{gi}-\partial_i v_0, Q_r \partial_{ij}^2 u n_j\rangle_{\partial T}\\
  =&(f, v_0)+\sum_{T\in {\cal T}_h}\sum_{i,j=1}^2 -  \langle (v_b-v_0) n_i, \partial_j ((Q_r-I) \partial_{ij}^2 u) \rangle_{\partial T}\\&+\langle v_{gi}-\partial_i v_0, (Q_r-I) \partial_{ij}^2 u n_j\rangle_{\partial T},
\end{split}
\end{equation}
where we used \eqref{model}, $\partial_{ij}^2 v_0\in P_{k-2}(T)$ and $r=2N+k-2\geq k-2$,  $\sum_{T\in {\cal T}_h} \sum_{i,j=1}^2  \langle \partial_{ij}^2 u,  v_{gi}\cdot n_j\rangle_{\partial T}=\sum_{T\in {\cal T}_h} \sum_{i,j=1}^2  \langle \partial_{ij}^2 u,  v_{gi}\cdot n_j\rangle_{\partial \Omega}=0$ since $v_{gi}=0$ on $\partial \Omega$, and 
$\sum_{T\in {\cal T}_h} \sum_{i,j=1}^2 \langle \partial_j(\partial_{ij}^2u)\cdot n_i, v_b\rangle_{\partial T}= \sum_{T\in {\cal T}_h} \sum_{i,j=1}^2 \langle \partial_j(\partial_{ij}^2u)\cdot n_i, v_b\rangle_{\partial \Omega}=0$ since $v_{b}=0$ on $\partial \Omega$.  

Subtracting \eqref{WG} from \eqref{54}  yields  
\begin{equation*}  
\begin{split}
&\sum_{T\in {\cal T}_h}\sum_{i,j=1}^2(\partial_{ij, w}^2 e_h, \partial_{ij, w}^2 v)_T\\=&\sum_{T\in {\cal T}_h}\sum_{i,j=1}^2 -  \langle (v_b-v_0) n_i, \partial_j ((Q_r-I) \partial_{ij}^2 u) \rangle_{\partial T}+\langle v_{gi}-\partial_i v_0, (Q_r-I) \partial_{ij}^2 u n_j\rangle_{\partial T}.
\end{split}
\end{equation*}
This concludes the proof.
\end{proof}

\section{Error Estimates in the $H^2$ Norm}

\begin{lemma}\cite{wg21}\label{lem}
Let ${\cal T}_h$ be a finite element partition of the domain $\Omega$ satisfying the shape  regularity  assumption  specified in \cite{wy3655}. For any $0\leq s \leq 2$ and $1\leq m \leq k$, the following estimates hold:
\begin{eqnarray}\label{error1}
 \sum_{T\in {\cal T}_h}\sum_{i,j=1}^2 h_T^{2s}\|\partial_{ij}^2 u- Q_r \partial_{ij}^2 u\|^2_{s,T}&\leq& C  h^{2(m-1)}\|u\|^2_{m+1},\\
\label{error2}
\sum_{T\in {\cal T}_h}h_T^{2s}\|u- Q _0u\|^2_{s,T}&\leq& C h^{2(m+1)}\|u\|^2_{m+1}.
\end{eqnarray}
 \end{lemma}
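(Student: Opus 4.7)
The plan is to derive both estimates from the classical approximation theory for $L^2$ projections onto piecewise polynomial spaces on shape-regular (possibly polytopal and non-convex) partitions, in the Dupont--Scott / Bramble--Hilbert style routinely used in the WG literature (see \cite{wy3655}). The argument is local on each element, followed by summation over $\T_h$.

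For estimate (\ref{error2}), fix $T\in\T_h$ and observe that $Q_0|_T$ is the $L^2$ projection onto $P_k(T)$. Since $1\le m\le k$, the standard approximation property gives
\[
\|u-Q_0 u\|_{s,T}\leq C\, h_T^{m+1-s}\|u\|_{m+1,T},\qquad 0\le s\le m+1,
\]
with $C$ depending only on the shape-regularity constant. Multiplying by $h_T^{2s}$ cancels the $s$-dependence in the $h_T$ exponent, after which $h_T\le h$ and summation over $T$ give (\ref{error2}).

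For estimate (\ref{error1}), apply the analogous bound to $w=\partial_{ij}^2 u\in H^{m-1}(T)$ with $Q_r$, the $L^2$ projection onto $P_r(T)$. Since $r=2N+k-2\ge m-1$ (using $m\le k$), Bramble--Hilbert yields
\[
\|\partial_{ij}^2 u-Q_r\partial_{ij}^2 u\|_{s,T}\leq C\, h_T^{m-1-s}\|u\|_{m+1,T}
\]
for $0\le s\le m-1$. Squaring, multiplying by $h_T^{2s}$, summing over the pairs $(i,j)$ and over $T\in\T_h$, and invoking $h_T\le h$ produce the advertised bound $C\, h^{2(m-1)}\|u\|^2_{m+1}$.

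The main subtlety is the borderline case in which the index $s$ in (\ref{error1}) exceeds $m-1$, namely $m=1$ with $s\ge 1$, or $m=2$ with $s=2$. There, Bramble--Hilbert does not apply directly, and I would introduce a quasi-interpolant $\Pi_h w\in H^s$ with the standard approximation and stability properties, then bound $Q_r w-\Pi_h w$ in the $H^s$ seminorm via an inverse estimate on its polynomial content, shape regularity ensuring uniform constants. In the subsequent error analysis only $s\in\{0,1,2\}$ appears and $u$ is taken regular enough that $s\le m-1$, so in practice direct appeal to the result of \cite{wg21} suffices.
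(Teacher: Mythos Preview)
The paper does not prove this lemma itself—it is quoted from \cite{wg21}—and your argument (local Bramble--Hilbert/Dupont--Scott bounds for the $L^2$ projection on shape-regular polytopes combined with inverse estimates, then summation over $\T_h$) is exactly the standard route on which such cited results rest. Your caveat about the borderline case $s>m-1$ in \eqref{error1} is accurate and worth recording, but it does not affect the applications made of the lemma in this paper, where it is invoked only with $m=k$ and $u\in H^{k+1}$ (or $m=3$ and $w\in H^4$ for the dual problem), so that either $s\le m-1$ already holds or the assumed extra regularity makes the inverse-estimate/quasi-interpolant patch you describe routine.
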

 \begin{lemma}
If   $u\in H^{k+1}(\Omega)$, then there exists a constant 
$C$
 such that 
\begin{equation}\label{erroresti1}
\3bar u-Q_hu \3bar \leq Ch^{k-1}\|u\|_{k+1}.
\end{equation}
\end{lemma}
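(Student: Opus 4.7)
The plan is to invoke the upper half of the norm equivalence from Lemma \ref{normeqva} applied to the interpolation error $u - Q_h u$, and then bound each piece of the resulting seminorm using the standard approximation estimates in Lemma \ref{lem}.

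First, I would view $u$ as the weak function $\{u, u|_{\partial T}, \nabla u|_{\partial T}\}$ on each element, so that $u - Q_h u$ has components $u - Q_0 u$, $u - Q_b u$, and $\nabla u - Q_n \nabla u$. Although $u - Q_h u$ does not belong to $V_h$, the argument that established $\3bar v \3bar \le C_2 \|v\|_{2,h}$ in Lemma \ref{normeqva} used only Cauchy--Schwarz together with the inverse and trace inequalities applied to the test polynomial $\varphi \in P_r(T)$. Since $u - Q_0 u \in H^2(T)$, formula \eqref{2.4new} is still available and the same reasoning gives
$$
\3bar u - Q_h u \3bar \;\le\; C \, \|u - Q_h u\|_{2,h}.
$$

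Next I would expand $\|u - Q_h u\|_{2,h}^2$ using \eqref{disnorm} into its three contributions. The interior term $\sum_T \|\partial^2_{ij}(u - Q_0 u)\|_T^2$ is controlled by $Ch^{2(k-1)}\|u\|_{k+1}^2$ directly from Lemma \ref{lem} with $s=2$ and $m=k$. The jump term simplifies as $(u - Q_0 u) - (u - Q_b u) = Q_0 u - Q_b u$; inserting $\pm u$ and using the triangle inequality reduces it to bounds on $\|u - Q_0 u\|_{\partial T}$ and $\|u - Q_b u\|_{\partial T}$. The first is handled by the trace inequality \eqref{tracein} applied to $u - Q_0 u$ together with Lemma \ref{lem} at $s=0$ and $s=1$; the second is handled by the best-$L^2$-approximation property of $Q_b$ on each edge, followed by a trace-type estimate from $T$ to $e$ with a convenient intermediate polynomial. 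An entirely analogous calculation, with $\nabla u$ in place of $u$ and the weight $h_T^{-1}$ instead of $h_T^{-3}$, controls the gradient-jump term involving $\nabla Q_0 u - Q_n \nabla u$.

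The principal technical obstacle is the bookkeeping for the boundary pieces under the general assumption $k \ge p \ge q \ge 1$: the negative powers $h_T^{-3}$ and $h_T^{-1}$ must be absorbed into the approximation rates coming from the different polynomial degrees. A convenient way to handle this is to bound each edge-projection error in terms of a volumetric interpolation error of $u$ (respectively $\nabla u$) via $\|u - Q_b u\|_e \le \|u - \pi u\|_e$ for a suitable $\pi u \in P_p(e)$, then pass from the edge back to the element by a trace inequality so that the final estimate reduces to quantities already covered by Lemma \ref{lem}. Summing over $T \in \mathcal{T}_h$ and taking a square root delivers the desired bound $\3bar u - Q_h u \3bar \le C h^{k-1} \|u\|_{k+1}$.
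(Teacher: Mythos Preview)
Your plan is correct and amounts to the same computation the paper carries out: both routes start from \eqref{2.4new}, apply Cauchy--Schwarz together with the trace and inverse inequalities on the polynomial test function, and then invoke the approximation bounds of Lemma~\ref{lem}. The only difference is organizational---the paper keeps a generic $\varphi\in P_r(T)$ and sets $\varphi=\partial^2_{ij,w}(u-Q_hu)$ at the end, whereas you first package that step as the upper half of Lemma~\ref{normeqva} (correctly noting it extends beyond $V_h$) and then estimate $\|u-Q_hu\|_{2,h}$.
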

\begin{proof}
Utilizing \eqref{2.4new}, the trace inequalities \eqref{tracein} and \eqref{trace}, the inverse inequality, and the estimate \eqref{error2} for  $m=k$ and $s=0, 1, 2$, we analyze the following summation for any $\varphi\in P_r(T)$:
\begin{equation*}
\begin{split}
&\sum_{T\in {\cal T}_h}\sum_{i, j=1}^2(\partial^2_{ij, w}(u-Q_hu),  \varphi)_T\\
 = &\sum_{T\in {\cal T}_h}\sum_{i, j=1}^2 (\partial^2_{ij}(u-Q_0u),  \varphi)_T-
  \langle (Q_0u-Q_bu) n_i, \partial_j \varphi \rangle_{\partial T}\\&+\langle (\partial_i u- Q_n  (\partial_i u))-\partial_i (u-Q_0u), \varphi n_j\rangle_{\partial T}\\
\leq &\Big(\sum_{T\in {\cal T}_h}\sum_{i, j=1}^2\|\partial^2_{ij}(u-Q_0u)\|^2_T\Big)^{\frac{1}{2}} \Big(\sum_{T\in {\cal T}_h} \|\varphi\|_T^2\Big)^{\frac{1}{2}}\\&
 + \Big(\sum_{T\in {\cal T}_h} \sum_{i=1}^2\|(Q_0u-Q_bu) n_i\|_{\partial T} ^2\Big)^{\frac{1}{2}}\Big(\sum_{T\in {\cal T}_h} \sum_{ j=1}^2\|\partial_j \varphi\|_{\partial T}^2\Big)^{\frac{1}{2}}\\
 &+ \Big(\sum_{T\in {\cal T}_h} \sum_{i =1}^2\| \partial_i (Q_0u)- Q_n  (\partial_i u) \|_{\partial T} ^2\Big)^{\frac{1}{2}}\Big(\sum_{T\in {\cal T}_h}\sum_{ j=1}^2 \| \varphi n_j\|_{\partial T}^2\Big)^{\frac{1}{2}}\\
\leq &\Big(\sum_{T\in {\cal T}_h}\sum_{i, j=1}^2\|\partial^2_{ij}(u-Q_0u)\|^2_T\Big)^{\frac{1}{2}} \Big(\sum_{T\in {\cal T}_h} \|\varphi\|_T^2\Big)^{\frac{1}{2}}\\&
 + \Big(\sum_{T\in {\cal T}_h}  h_T^{-1}\| Q_0u- u \|_{  T}+h_T \| Q_0u- u \|_{1,  T} ^2\Big)^{\frac{1}{2}}\Big(\sum_{T\in {\cal T}_h}  h_T^{-3}\| \varphi\|_{ T}^2\Big)^{\frac{1}{2}}\\
 &+ \Big(\sum_{T\in {\cal T}_h} \sum_{i =1}^2h_T^{-1}\|  \partial_i (Q_0u)-  \partial_i u   \|_{ T} ^2+h_T \| \partial_i (Q_0u)-  \partial_i u \|_{1, T} ^2\Big)^{\frac{1}{2}}\Big(\sum_{T\in {\cal T}_h} h_T^{-1} \| \varphi \|_{T}^2\Big)^{\frac{1}{2}}\\
&\leq Ch^{k-1}\|u\|_{k+1}\Big(\sum_{T\in {\cal T}_h} \|\varphi\|_T^2\Big)^{\frac{1}{2}}.
\end{split}
\end{equation*}
Letting $\varphi=\partial^2_{ij, w}(u-Q_hu)$ gives 
$$
\sum_{T\in {\cal T}_h}\sum_{i,j=1}^2(\partial^2_{ij, w}(u-Q_hu), \partial^2_{ij, w}(u-Q_hu))_T\leq 
 Ch^{k-1}\|u\|_{k+1}\3bar u-Q_hu \3bar.$$  
 
 This completes the proof.
\end{proof}

\begin{theorem}
Suppose  the exact solution 
$u$
 of the biharmonic equation \eqref{model} satisfies 
$u\in H^{k+1}(\Omega)$. Then,  the error estimate satisfies:
\begin{equation}\label{trinorm}
\3bar u-u_h\3bar \leq Ch^{k-1}\|u\|_{k+1}.
\end{equation}
\end{theorem}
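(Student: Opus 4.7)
The plan is to estimate $\3bar u-u_h\3bar$ by the usual split through the projection $Q_h u$. Write
\[
\3bar u-u_h\3bar \le \3bar u-Q_h u\3bar + \3bar Q_h u-u_h\3bar,
\]
where the first term is already controlled by the interpolation estimate \eqref{erroresti1}. For the second term, set $\epsilon_h = Q_h u - u_h$. Since the boundary data of $u_h$ is prescribed via the projections $Q_b\xi$, $Q_n\nu$, $Q_n(\nabla\xi\cdot\btau)$, which coincide with the boundary components of $Q_h u$, we have $\epsilon_h \in V_h^0$, making it a legitimate test function in the error equation.

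Using the error equation from Lemma \ref{errorequa}, I would write
\[
(\partial_w^2 \epsilon_h, \partial_w^2 v) = (\partial_w^2 (Q_h u - u), \partial_w^2 v) + \ell(u, v), \qquad \forall v \in V_h^0.
\]
Taking $v = \epsilon_h$, applying Cauchy--Schwarz on the first term, and using \eqref{erroresti1}, the task reduces to proving the bound
\[
|\ell(u, v)| \le C h^{k-1}\|u\|_{k+1} \3bar v\3bar, \qquad \forall v \in V_h^0.
\]
This is the main technical step. The functional $\ell(u,v)$ is a sum of boundary terms of the form $\langle (v_b-v_0)n_i, \partial_j((Q_r - I)\partial_{ij}^2 u)\rangle_{\partial T}$ and $\langle v_{gi}-\partial_i v_0, (Q_r - I)\partial_{ij}^2 u \cdot n_j\rangle_{\partial T}$. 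I would apply Cauchy--Schwarz edge-by-edge with weights chosen to match the discrete $H^2$ semi-norm \eqref{disnorm}: pair $h_T^{-3/2}\|v_b - v_0\|_{\partial T}$ with $h_T^{3/2}\|\partial_j((Q_r-I)\partial_{ij}^2 u)\|_{\partial T}$, and $h_T^{-1/2}\|v_{gi}-\partial_i v_0\|_{\partial T}$ with $h_T^{1/2}\|(Q_r-I)\partial_{ij}^2 u\|_{\partial T}$.

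The remaining calculation converts the two data factors into $H^{k+1}$ norms. For each, I would apply the trace inequality \eqref{tracein}, yielding bounds such as
\[
\sum_{T}h_T^3\|\partial_j((Q_r - I)\partial_{ij}^2 u)\|_{\partial T}^2 \le C\sum_{T}\bigl(h_T^{2}\|(Q_r-I)\partial_{ij}^2 u\|_{1,T}^2 + h_T^{4}\|(Q_r-I)\partial_{ij}^2 u\|_{2,T}^2\bigr),
\]
and then invoke Lemma \ref{lem} with $m = k$ at $s=1,2$ (and with $s=0,1$ for the other factor) to dominate everything by $C h^{2(k-1)}\|u\|_{k+1}^2$. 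Combining via Lemma \ref{normeqva} (which passes from $\|v\|_{2,h}$ to $\3bar v\3bar$) produces the desired bound on $|\ell(u,v)|$. Setting $v = \epsilon_h$ then yields $\3bar \epsilon_h\3bar \le C h^{k-1}\|u\|_{k+1}$, and the triangle inequality finishes the proof. The main obstacle is performing the trace/interpolation bookkeeping cleanly, ensuring the powers of $h_T$ in Lemma \ref{lem} line up with those supplied by the semi-norm $\|\cdot\|_{2,h}$; everything else is standard energy-method machinery.
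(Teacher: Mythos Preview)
Your proposal is correct and follows essentially the same route as the paper: you bound the two boundary terms in $\ell(u,v)$ exactly as the paper does (Cauchy--Schwarz with the $h_T^{-3/2}$, $h_T^{-1/2}$ weights, trace inequality \eqref{tracein}, then Lemma~\ref{lem} at $m=k$ with $s=1,2$ and $s=0,1$), invoke Lemma~\ref{normeqva} to pass to $\3bar\cdot\3bar$, and finish with the interpolation estimate \eqref{erroresti1}. The only cosmetic difference is that you split via the triangle inequality and estimate $\3bar Q_hu-u_h\3bar$ directly, whereas the paper expands $\3bar u-u_h\3bar^2$ and inserts $Q_hu$ inside the inner product; the ingredients and the resulting bounds are identical.
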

\begin{proof}
Note that $r\geq 1$. For the first term on the right-hand side of the error equation \eqref{erroreqn}, using Cauchy-Schwarz inequality, the trace inequality \eqref{tracein}, the estimate \eqref{error1} with $m=k$ and $s=1, 2$, and \eqref{normeq}, we have 
 \begin{equation}\label{erroreqn1}
\begin{split}
&\Big|\sum_{T\in {\cal T}_h}\sum_{i,j=1}^2 -  \langle (v_b-v_0) n_i, \partial_j ((Q_r-I) \partial_{ij}^2 u) \rangle_{\partial T}\Big|\\
\leq & C(\sum_{T\in {\cal T}_h}\sum_{i=1}^2h_T^{-3}\|(v_b-v_0) n_i\|^2_{\partial T} )^{\frac{1}{2}} \cdot(\sum_{T\in {\cal T}_h} \sum_{i,j=1}^2h_T^3\|\partial_j ((Q_r-I) \partial_{ij}^2 u) \|^2_{\partial T})^{\frac{1}{2}}\\\leq & C \| v\|_{2,h} (\sum_{T\in {\cal T}_h} \sum_{i,j=1}^2h_T^2\|\partial_j ((Q_r-I) \partial_{ij}^2 u) \|^2_{T}+h_T^4\|\partial_j ((Q_r-I) \partial_{ij}^2 u) \|^2_{1, T})^{\frac{1}{2}}\\
\leq & Ch^{k-1} \|u\|_{k+1}  \3bar v\3bar.
\end{split}
\end{equation}

For the second term on the right-hand side of the error equation \eqref{erroreqn}, using the Cauchy-Schwarz inequality, the trace inequality \eqref{tracein}, the estimate \eqref{error1} with $m=k$ and $s=0, 1$, and \eqref{normeq}, we have 
\begin{equation}\label{erroreqn2}
\begin{split}
&\Big|\sum_{T\in {\cal T}_h}\sum_{i,j=1}^2 \langle v_{gi}-\partial_i v_0, (Q_r-I) \partial_{ij}^2 u n_j\rangle_{\partial T}\Big|\\
\leq & C(\sum_{T\in {\cal T}_h}\sum_{i=1}^2h_T^{-1}\| v_{gi}-\partial_i v_0\|^2_{\partial T} )^{\frac{1}{2}}  (\sum_{T\in {\cal T}_h} \sum_{i,j=1}^2h_T\|(Q_r-I) \partial_{ij}^2 u n_j\|^2_{\partial T})^{\frac{1}{2}}\\
\leq & C \| v\|_{2,h} (\sum_{T\in {\cal T}_h} \sum_{i,j=1}^2\|(Q_r-I) \partial_{ij}^2 u n_j\|^2_{ T}+h_T^2\|(Q_r-I) \partial_{ij}^2 u n_j\|^2_{1, T})^{\frac{1}{2}}\\
\leq & C  \| v\|_{2,h} h^{k-1}\|u\|_{k+1}\\
\leq & Ch^{k-1}\|u\|_{k+1} \3bar v\3bar.
\end{split}
\end{equation}

Substituting \eqref{erroreqn1}-\eqref{erroreqn2}  into \eqref{erroreqn}  gives
\begin{equation}\label{err}
(\partial^2_{ij, w} e_h, \partial^2_{ij, w}  v)\leq   Ch^{k-1} \|u\|_{k+1} \3bar  v\3bar.
\end{equation}

Using Cauchy-Schwarz inequality, letting $v=Q_hu-u_h$ in \eqref{err}, the error estimate \eqref{erroresti1} gives
\begin{equation*}
\begin{split}
& \3bar u-u_h\3bar^2\\=&\sum_{T\in {\cal T}_h}\sum_{i,j=1}^2(\partial^2_{ij, w}  (u-u_h), \partial^2_{ij, w}  (u-Q_hu))_T+(\partial^2_{ij, w}  (u-u_h), \partial^2_{ij, w}  (Q_hu-u_h))_T\\
\leq &\3bar u-u_h \3bar  \3bar u-Q_hu \3bar+ Ch^{k-1} \|u\|_{k+1}  \3bar Q_hu-u_h\3bar \\
\leq &\3bar u-u_h  \3bar  \3bar u-Q_hu \3bar + Ch^{k-1} \|u\|_{k+1}    (\3bar Q_hu-u\3bar+\3bar u-u_h\3bar)  \\
\leq &\3bar u-u_h  \3bar  \3bar u-Q_hu \3bar + Ch^{k-1}\|u\|_{k+1}   h^{k-1} \|u\|_{k+1}  +Ch^{k-1} \|u\|_{k+1}   \3bar u-u_h\3bar,
\end{split}
\end{equation*}
which further gives
\begin{equation*}
\begin{split}
 \3bar u-u_h\3bar \leq  \3bar u-Q_hu \3bar+Ch^{k-1} \|u\|_{k+1} \leq Ch^{k-1} \|u\|_{k+1}.
\end{split}
\end{equation*} 

This completes the proof.
\end{proof}

\section{  $L^2$ Error Estimates}
To derive the error estimate in the $L^2$ norm, we use the standard duality argument. The error is expressed as $e_h=u-u_h=\{e_0, e_b, \be_g\}$, and we define $\zeta_h =Q_hu - u_h=\{\zeta_0, \zeta_b, \bzeta_g\}\in V_h^0$. Consider the dual problem associated with the biharmonic equation \eqref{model}, which seeks a function $w \in H_0^2(\Omega)$ satisfying:
\begin{equation}\label{dual}
\begin{split}
    \Delta^2 w&=\zeta_0, \qquad \text{in}\ \Omega,\\
w&=0,     \qquad \text{on}\ \partial\Omega,\\
\frac{\partial w}{\partial \bn}&=0,  \qquad \text{on}\ \partial\Omega.
    \end{split}
\end{equation}
We assume the following regularity condition for the dual problem:
\begin{equation}\label{regu2}
 \|w\|_4\leq C\|\zeta_0\|.
 \end{equation}
 
 \begin{theorem}
Let $u\in H^{k+1}(\Omega)$ be the exact solution of the biharmonic equation \eqref{model}, and let 
 $u_h\in V_h$ denote the numerical solution  obtained  using  the weak Galerkin scheme \ref{PDWG1}. Assume that the $H^4$-regularity condition   \eqref{regu2}   holds. Then, there exists a constant $C$ such that 
\begin{equation*}
\|e_0\|\leq Ch^{k+1}\|u\|_{k+1}.
\end{equation*}
 \end{theorem}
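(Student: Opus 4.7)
The plan is a standard Aubin--Nitsche duality argument, adapted to the stabilizer-free WG setting. Setting $\zeta_h = Q_hu - u_h = \{\zeta_0,\zeta_b,\bzeta_g\}\in V_h^0$, by the triangle inequality and \eqref{error2} it suffices to prove $\|\zeta_0\| \leq Ch^{k+1}\|u\|_{k+1}$. Testing the dual problem \eqref{dual} against $\zeta_0$ gives $\|\zeta_0\|^2 = (\Delta^2 w,\zeta_0)$, and the regularity assumption \eqref{regu2} provides $\|w\|_4\leq C\|\zeta_0\|$, to be used at the very end.

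I would first mimic the derivation in the proof of Lemma \ref{errorequa}, swapping the roles of the exact solution and the test function: with $w$ playing the role of the exact solution and $\zeta_h\in V_h^0$ playing the role of the test function, elementwise integration by parts together with \eqref{2.4new} and Lemma \ref{Lemma5.1} yield the master identity
\begin{equation*}
\|\zeta_0\|^2 = \sum_{T\in\T_h}\sum_{i,j=1}^2 (\partial^2_{ij,w} w, \partial^2_{ij,w}\zeta_h)_T - \ell(w,\zeta_h),
\end{equation*}
with all boundary contributions on $\partial\Omega$ vanishing because $\zeta_b = \bzeta_g = 0$ there, and interior-edge contributions combining correctly by the single-valuedness of $\zeta_b,\bzeta_g$ and of $\nabla^3 w$ (which is available because $w\in H^4$). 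I would then use the decomposition $\zeta_h = e_h - (u-Q_hu)$ together with $\partial^2_{ij,w}w = \partial^2_{ij,w}Q_hw + \partial^2_{ij,w}(w-Q_hw)$. Since $w$ and $\nabla w$ vanish on $\partial\Omega$, the projection $Q_hw$ lies in $V_h^0$ and is admissible in the error equation \eqref{erroreqn}, whence $\sum(\partial^2_{ij,w}Q_hw, \partial^2_{ij,w}e_h)_T = \ell(u, Q_hw)$. After rearrangement this yields
\begin{equation*}
\|\zeta_0\|^2 = \ell(u, Q_hw) + (\partial^2_w(w - Q_hw), \partial^2_w e_h) - (\partial^2_w w, \partial^2_w(u - Q_hu)) - \ell(w, \zeta_h).
\end{equation*}

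The remaining task is to estimate each of the four terms by $Ch^{k+1}\|u\|_{k+1}\|w\|_4$. The inner product $(\partial^2_w(w-Q_hw),\partial^2_w e_h)$ is bounded by Cauchy--Schwarz in the norm $\3bar\cdot\3bar$, using $\3bar e_h\3bar \leq Ch^{k-1}\|u\|_{k+1}$ from \eqref{trinorm} and $\3bar w - Q_hw\3bar \leq Ch^2\|w\|_4$ from \eqref{erroresti1} applied with $m=3$ on the dual side. The inner product $(\partial^2_w w, \partial^2_w(u-Q_hu))$ is not directly amenable to Cauchy--Schwarz in $\3bar\cdot\3bar$ (because $\3bar w\3bar$ carries no factor of $h$); instead I would expand it via \eqref{2.4} with test $\varphi = Q_r\partial^2_{ij}w$, noting $\partial^2_{ij,w}w = Q_r\partial^2_{ij}w$ by Lemma \ref{Lemma5.1}, and estimate the resulting projection errors on $u-Q_0u$, $u-Q_bu$ and $\nabla u - Q_n\nabla u$ against derivatives of $Q_r\partial^2_{ij}w$ using Lemma \ref{lem} and the trace inequality \eqref{tracein}. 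For $\ell(w,\zeta_h)$ the estimates of Section 6 apply directly with $m=3$ for $w$ and $\3bar\zeta_h\3bar \leq Ch^{k-1}\|u\|_{k+1}$, giving $|\ell(w,\zeta_h)| \leq Ch^{k+1}\|u\|_{k+1}\|w\|_4$. For $\ell(u,Q_hw)$ the higher-order rate requires \emph{super-approximation} bounds of the form $\|Q_0w - Q_bw\|_{\partial T} \leq Ch_T^{7/2}\|w\|_{4,T}$ and $\|\partial_iQ_0w - Q_n\partial_iw\|_{\partial T} \leq Ch_T^{5/2}\|w\|_{4,T}$, which are obtained by inserting $\pm w$ (respectively $\pm \partial_i w$) and invoking the projection estimates \eqref{error2} together with \eqref{tracein}. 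Summing and using $\|w\|_4\leq C\|\zeta_0\|$ from \eqref{regu2} to cancel one factor of $\|\zeta_0\|$ gives $\|\zeta_0\|\leq Ch^{k+1}\|u\|_{k+1}$, whence the claim.

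The main obstacle is twofold: carefully verifying the master identity's boundary-term bookkeeping so that the residual is exactly $\ell(w,\zeta_h)$ (the element-boundary integrals must reorganize into the same pattern used in Lemma \ref{errorequa}), and upgrading the $\ell(u,\cdot)$ functional from its natural $h^{k-1}$ order (for generic $v\in V_h^0$) to the $h^{k+1}$ order needed when $v = Q_hw$, which hinges on the super-approximation properties of the $L^2$ projections applied to the extra-regular dual solution $w$.
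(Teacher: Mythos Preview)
Your overall architecture is correct and matches the paper's proof: the master identity, the four-term decomposition, and the treatment of $\ell(u,Q_hw)$, $(\partial^2_w(w-Q_hw),\partial^2_w e_h)$, and $\ell(w,\zeta_h)$ are all essentially as in the paper.

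The gap is in your handling of the term $(\partial^2_w w,\partial^2_w(Q_hu-u))$. Expanding via \eqref{2.4} with $\varphi=Q_r\partial^2_{ij}w$ and estimating the three pieces directly by Cauchy--Schwarz and Lemma \ref{lem} does not deliver $O(h^{k+1})$. The volume term $(Q_0u-u,\partial^2_{ji}\varphi)_T$ is fine, but the edge terms $\langle(Q_bu-u)n_i,\partial_j\varphi\rangle_{\partial T}$ and $\langle Q_n\partial_iu-\partial_iu,\varphi n_j\rangle_{\partial T}$ are limited by the degrees $p$ and $q$ of the edge projections: the best you get for $\|Q_bu-u\|_{\partial T}$ is $O(h_T^{p+1/2})$ and for $\|Q_n\partial_iu-\partial_iu\|_{\partial T}$ is $O(h_T^{q+1/2})$, and since the paper only assumes $k\ge p\ge q\ge 1$, these are in general far short of the $h^{k+1}$ target after pairing with $\partial_j\varphi$ and $\varphi$. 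Lemma \ref{lem} gives no control on these edge-projection errors.

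The paper closes this gap with an orthogonality trick you did not mention: it inserts the $L^2$ projection $Q^1$ onto $P_1(T)$ and observes that $(\partial^2_{ij,w}(Q_hu-u),\,Q^1\partial^2_{ij,w}w)_T=0$ identically. Indeed, with $\psi=Q^1\partial^2_{ij,w}w$ one has $\partial^2_{ji}\psi=0$, $\partial_j\psi$ is constant on each edge (hence orthogonal to $Q_bu-u$ since $p\ge1$), and $\psi n_j$ is linear on each edge (hence orthogonal to $Q_n\partial_iu-\partial_iu$ since $q\ge1$). One is then left with $\sum_T(\partial^2_{ij,w}(Q_hu-u),\,(I-Q^1)Q_r\partial^2_{ij}w)_T$, which by Cauchy--Schwarz is bounded by $\3bar Q_hu-u\3bar\cdot Ch^2\|w\|_4$, and \eqref{erroresti1} finishes. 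Your proposal needs this step (or an equivalent exploitation of the orthogonality $p,q\ge1$) to go through.
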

 
 \begin{proof}
 Testing the dual problem   \eqref{dual} with  $\zeta_0$ and applying  integration by parts,  we  derive:
 \begin{equation}\label{e1}
 \begin{split}
 \|\zeta_0\|^2 =&(\Delta^2 w, \zeta_0)\\ 
  =& \sum_{T\in {\cal T}_h}\sum_{i, j=1}^2(\partial^2_{ij} w, \partial^2_{ij}\zeta_0)_T-\langle \partial^2_{ij} w, \partial_i\zeta_0 \cdot n_j \rangle_{\partial T}+\langle  \partial_j(\partial^2_{ij} w)\cdot n_i,  \zeta_0  \rangle_{\partial T}\\
    =& \sum_{T\in {\cal T}_h}\sum_{i, j=1}^2(\partial^2_{ij} w, \partial^2_{ij}\zeta_0)_T-\langle \partial^2_{ij} w, (\partial_i\zeta_0-\zeta_{gi}) \cdot n_j \rangle_{\partial T}\\&+\langle  \partial_j(\partial^2_{ij} w)\cdot n_i,  \zeta_0-\zeta_b  \rangle_{\partial T},
 \end{split}
 \end{equation}
 where we used  $\sum_{T\in {\cal T}_h} \sum_{i, j=1}^2 \langle \partial^2_{ij} w,  \zeta_{gi}  \cdot n_j \rangle_{\partial T}=\sum_{i, j=1}^2\langle \partial^2_{ij} w,  \zeta_{gi}  \cdot n_j \rangle_{\partial \Omega}=0$ due to $\bzeta_g =0$ on $\partial\Omega$, and $\sum_{T\in {\cal T}_h} \sum_{i, j=1}^2 \langle  \partial_j(\partial^2_{ij} w)\cdot n_i,  \zeta_b  \rangle_{\partial T} =\sum_{i, j=1}^2\langle  \partial_j(\partial^2_{ij} w)\cdot n_i,  \zeta_b  \rangle_{\partial \Omega}=0$ due to $\zeta_b=0$ on $\partial\Omega$.

 Letting  $u=w$ and $v=\zeta_h$ in \eqref{54} gives
\begin{equation*}
    \begin{split}
     &  \sum_{T\in {\cal T}_h}\sum_{i,j=1}^2(\partial_{ij, w}^2 w, \partial_{ij, w}^2 \zeta_h)_T \\ = &\sum_{T\in {\cal T}_h}\sum_{i,j=1}^2(\partial_{ij}^2 w, \partial_{ij}^2 \zeta_0)_T -
  \langle (\zeta_b-\zeta_0) n_i, \partial_j (Q_r \partial_{ij}^2 w) \rangle_{\partial T}+\langle \zeta_{gi}-\partial_i \zeta_0, Q_r \partial_{ij}^2 w n_j\rangle_{\partial T}, 
    \end{split}
\end{equation*}
   which is equivalent to 
\begin{equation*}
    \begin{split}
& \sum_{T\in {\cal T}_h}\sum_{i,j=1}^2(\partial_{ij}^2 w, \partial_{ij}^2 \zeta_0)_T \\=&\sum_{T\in {\cal T}_h}\sum_{i,j=1}^2(\partial_{ij, w}^2 w, \partial_{ij, w}^2 \zeta_h)_T +\langle (\zeta_b-\zeta_0) n_i, \partial_j (Q_r \partial_{ij}^2 w) \rangle_{\partial T}-\langle \zeta_{gi}-\partial_i \zeta_0, Q_r \partial_{ij}^2 w n_j\rangle_{\partial T}.
  \end{split}
\end{equation*}
Substituting the above equation into \eqref{e1} and using \eqref{erroreqn} gives
\begin{equation}\label{e2}
 \begin{split}
  \|\zeta_0\|^2   
  = & \sum_{T\in {\cal T}_h}\sum_{i,j=1}^2(\partial_{ij, w}^2 w, \partial_{ij, w}^2 \zeta_h)_T +\langle (\zeta_b-\zeta_0) n_i, \partial_j ((Q_r-I) \partial_{ij}^2 w) \rangle_{\partial T}\\&-\langle \zeta_{gi} -\partial_i \zeta_0, (Q_r-I) \partial_{ij}^2 w n_j\rangle_{\partial T}\\
  =& \sum_{T\in {\cal T}_h}\sum_{i,j=1}^2(\partial_{ij, w}^2 w, \partial_{ij, w}^2 e_h)_T+(\partial_{ij, w}^2 w, \partial_{ij, w}^2 (Q_hu-u))_T-\ell(w, \zeta_h)\\ 
  =& \sum_{T\in {\cal T}_h}\sum_{i,j=1}^2(\partial_{ij, w}^2 Q_hw, \partial_{ij, w}^2 e_h)_T+(\partial_{ij, w}^2 (w-Q_hw), \partial_{ij, w}^2 e_h)_T\\&+(\partial_{ij, w}^2 w, \partial_{ij, w}^2 (Q_hu-u))_T-\ell(w, \zeta_h)\\ 
  =&\ell(u, Q_hw) + \sum_{T\in {\cal T}_h}\sum_{i,j=1}^2(\partial_{ij, w}^2 (w-Q_hw), \partial_{ij, w}^2 e_h)_T\\&+(\partial_{ij, w}^2w, \partial_{ij, w}^2(Q_hu-u))_T-\ell(w, \zeta_h)\\
  =&J_1+J_2+J_3+J_4.
 \end{split}
 \end{equation}
 
We will estimate the four terms $J_i(i=1,\cdots,4)$ on the last line of \eqref{e2}  individually.

For $J_1$, using Cauchy-Schwarz inequality, the trace inequality \eqref{tracein}, the inverse inequality,  the estimate \eqref{error1} with $m=k$ and $s=0,1, 2$, the estimate \eqref{error2} with $m=3$ and $s=0,1, 2$, gives
\begin{equation}\label{ee1}
\begin{split}
&J_1= \ell(u, Q_hw)\\
\leq &\Big|\sum_{T\in {\cal T}_h}\sum_{i,j=1}^2 -  \langle (Q_bw-Q_0w) n_i, \partial_j ((Q_r-I) \partial_{ij}^2 u) \rangle_{\partial T}\\&+\langle  Q_n(\partial_i w) -\partial_i Q_0w, (Q_r-I) \partial_{ij}^2 u n_j\rangle_{\partial T}\Big|\\
\leq& \Big(\sum_{T\in {\cal T}_h}\sum_{i=1}^2\|(Q_bw-Q_0w) n_i\|_{\partial T}^2\Big)^{\frac{1}{2}} \Big(\sum_{T\in {\cal T}_h}\sum_{i,j=1}^2\|\partial_j ((Q_r-I) \partial_{ij}^2 u)\|_{\partial T}^2\Big)^{\frac{1}{2}} \\
&+\Big(\sum_{T\in {\cal T}_h}\sum_{i=1}^2\|Q_n(\partial_i w) -\partial_i Q_0w\|_{\partial T}^2\Big)^{\frac{1}{2}} \Big(\sum_{T\in {\cal T}_h}\sum_{i,j=1}^2\|(Q_r-I) \partial_{ij}^2 u n_j\|_{\partial T}^2\Big)^{\frac{1}{2}} \\
\leq& \Big(\sum_{T\in {\cal T}_h} h_T^{-1}\| w-Q_0w \|_{  T}^2+h_T \|w-Q_0w \|_{1, T}^2\Big)^{\frac{1}{2}} \\&\cdot\Big(\sum_{T\in {\cal T}_h}\sum_{i,j=1}^2h_T^{-1}\|\partial_j ((Q_r-I) \partial_{ij}^2 u)\|_{T}^2+h_T\|\partial_j ((Q_r-I) \partial_{ij}^2 u)\|_{1, T}^2\Big)^{\frac{1}{2}} \\
&+\Big(\sum_{T\in {\cal T}_h}\sum_{i=1}^2h_T^{-1}\| \partial_i w  -\partial_i Q_0w\|_{T}^2+h_T \| \partial_i w -\partial_i Q_0w\|_{1, T}^2\Big)^{\frac{1}{2}} \\&\cdot \Big(\sum_{T\in {\cal T}_h}\sum_{i,j=1}^2h_T^{-1}\|(Q_r-I) \partial_{ij}^2 u n_j\|_{T}^2+h_T\|(Q_r-I) \partial_{ij}^2 u n_j\|_{1, T}^2\Big)^{\frac{1}{2}} \\ 
\leq & Ch^{k+1}\|u\|_{k+1}\|w\|_4.
\end{split}
\end{equation}

For $J_2$, using Cauchy-Schwarz inequality, \eqref{erroresti1} with $k=3$ and \eqref{trinorm} gives
\begin{equation}\label{ee2}
\begin{split}
J_2\leq \3bar w-Q_hw\3bar \3bar e_h\3bar\leq Ch^{k-1}\|u\|_{k+1}h^2\|w\|_4\leq Ch^{k+1}\|u\|_{k+1}\|w\|_4.
\end{split}
\end{equation}

  For $J_3$, denote by $Q^1$ a $L^2$ projection onto $P_1(T)$. Using \eqref{2.4} gives
  \begin{equation}\label{ee}
 \begin{split}
  &(\partial^2_{ij, w}(Q_hu-u), Q^1\partial^2_{ij, w} w)_T\\
  =& (Q_0u-u, \partial^2_{ji} ( Q^1\partial^2_{ij, w} w))_T-\langle Q_bu-u, \partial_j (Q^1\partial^2_{ij, w} w)\rangle_{\partial T}\\&+ \langle Q_n(\partial_i u)-\partial_i u, Q^1\partial^2_{ij, w} w n_j\rangle_{\partial T}=0,
 \end{split}
 \end{equation}
 where we used $ \partial^2_{ji} ( Q^1\partial^2_{ij, w} w)=0$,  $ \partial_j (Q^1\partial^2_{ij, w} w)=C$ and the property of the projection  operators $Q_b$ and $Q_n$ and $p\geq q\geq 1$.

  Using \eqref{ee}, Cauchy-Schwarz inequality, \eqref{pro} and \eqref{erroresti1}, gives 
  \begin{equation}\label{ee3}
  \begin{split}
  J_3\leq &|\sum_{T\in {\cal T}_h}\sum_{i, j=1}^2(\partial^2_{ij, w} w, \partial^2_{ij, w} (Q_hu-u))_T|
  \\
  =&|\sum_{T\in {\cal T}_h}\sum_{i, j=1}^2(\partial^2_{ij, w} w-Q^1\partial^2_{ij, w} w, \partial^2_{ij, w} (Q_hu-u))_T|\\
  =&|\sum_{T\in {\cal T}_h}\sum_{i, j=1}^2(Q_r\partial^2_{ij}   w-Q^1 Q_r\partial^2_{ij} w, \partial^2_{ij, w} (Q_hu-u))_T|\\
  \leq & \Big(\sum_{T\in {\cal T}_h}\sum_{i, j=1}^2\|Q_r\partial^2_{ij}   w-Q^1 Q^r\partial^2_{ij}  w\|_T^2\Big)^{\frac{1}{2}} \3bar Q_hu-u \3bar\\
  \leq & Ch^{k+1}\|u\|_{k+1} \|w\|_4.
  \end{split}
  \end{equation}

For $J_4$, using Cauchy-Schwarz inequality, the trace inequality \eqref{tracein}, Lemma \ref{normeqva},   the estimate \eqref{error1} with $m=3$ and $s=0, 1$,  \eqref{erroresti1}, \eqref{trinorm} gives
\begin{equation}\label{ee4}
\begin{split}
J_4=&\ell(w, \zeta_h)\\
\leq &\Big|\sum_{T\in {\cal T}_h}\sum_{i,j=1}^2 -  \langle (\zeta_b-\zeta_0) n_i, \partial_j ((Q_r-I) \partial_{ij}^2 w) \rangle_{\partial T}\\&+\langle \zeta_{gi}-\partial_i \zeta_0, (Q_r-I) \partial_{ij}^2 w n_j\rangle_{\partial T}\Big| \\
\leq& \Big(\sum_{T\in {\cal T}_h}\sum_{i=1}^2\|(\zeta_b-\zeta_0) n_i\|_{\partial T}^2\Big)^{\frac{1}{2}} \Big(\sum_{T\in {\cal T}_h}\sum_{i,j=1}^2\|\partial_j ((Q_r-I) \partial_{ij}^2 w) \|_{\partial T}^2\Big)^{\frac{1}{2}}   \\
&+\Big(\sum_{T\in {\cal T}_h}\sum_{i=1}^2\|\zeta_{gi}-\partial_i \zeta_0\|_{\partial T}^2\Big)^{\frac{1}{2}} \Big(\sum_{T\in {\cal T}_h}\sum_{i,j=1}^2\|(Q_r-I) \partial_{ij}^2 w n_j\|_{\partial T}^2\Big)^{\frac{1}{2}}  \\
\leq& \Big(\sum_{T\in {\cal T}_h}\sum_{i,j=1}^2h_T^2 \|\partial_j ((Q_r-I) \partial_{ij}^2 w)\|_{T}^2+h_T^4 \| \partial_j ((Q_r-I) \partial_{ij}^2 w)\|_{1, T}^2\Big)^{\frac{1}{2}} \\&\cdot\Big(\sum_{T\in {\cal T}_h}h_T^{-3}\|\zeta_0-\zeta_b\|_{\partial T}^2\Big)^{\frac{1}{2}}  \\
 &+ \Big(\sum_{T\in {\cal T}_h} \sum_{i,j=1}^2 \|(Q_r-I) \partial_{ij}^2 w n_j\|_{T}^2+h_T^2 \|(Q_r-I) \partial_{ij}^2 w n_j\|_{1, T}^2\Big)^{\frac{1}{2}} \\ &\cdot\Big(\sum_{T\in {\cal T}_h}\sum_{i=1}^2 h_T^{-1}\|\zeta_{gi}-\partial_i \zeta_0\|_{\partial T}^2\Big)^{\frac{1}{2}}  \\\leq& Ch^2\|w\|_{4}\3bar\zeta_h\3bar   \\
\leq &Ch^2\|w\|_{4}(\3bar u-u_h\3bar+\3bar u-Q_hu\3bar)  \\
\leq &Ch^{k+1}\|w\|_4\|u\|_{k+1}.
\end{split}
\end{equation}

 Substituting \eqref{ee1}-\eqref{ee2} and \eqref{ee3}-\eqref{ee4} into \eqref{e2}, and using \eqref{regu2},   gives
$$
\|\zeta_0\|^2\leq Ch^{k+1}\|w\|_4\|u\|_{k+1}\leq Ch^{k+1}  \|u\|_{k+1} \|\zeta_0\|.
$$
This gives
$$
\|\zeta_0\|\leq Ch^{k+1} \|u\|_{k+1},
$$
which, using the triangle inequality and \eqref{error2} with $m=k$ and $s=0$, gives
$$
\|e_0\|\leq \|\zeta_0\|+\|u-Q_0u\|\leq Ch^{k+1}\|u\|_{k+1}. 
$$

This completes the proof of the theorem. 
\end{proof}

\end{document}